\newcommand{\hh}{\textbf{H}}
\newcommand{\ee}{\textbf{E}}
\renewcommand{\L}{\emph{\textbf{L}}}
\newcommand{\Vg}{\textbf{V}(\Ga)}
\newcommand{\MtE}{\text{\boldmath{$S$}}}
\newcommand{\nn}{\text{\boldmath{$\nu$}}}
\newcommand{\vv}{\textbf{v}}
\newcommand{\uu}{\textbf{u}}
\newcommand{\ww}{\textbf{w}}
\newcommand{\ff}{\textbf{f}}
\newcommand{\ga}{\gamma}
\newcommand{\hc}{\emph{\textbf{H}}_{\rot}}
\newcommand{\hcext}{\emph{\textbf{H}}_{\rot}^{\text{ext}}}
\renewcommand{\H}{\emph{\textbf{H}}}
\newcommand{\hcg}{\emph{\textbf{H}}_{\Rot}}
\newcommand{\hdg}{\emph{\textbf{H}}_{\Div}}
\newcommand{\Grad}{\nabla_{\Ga}}
\newcommand{\<}{\langle}
\renewcommand{\>}{\rangle}
\newcommand{\Om}{\Omega}
\newcommand{\Ga}{\Gamma}
\renewcommand{\div}{\mathrm{div}}
\newcommand{\Div}{\mathrm{div}_\Ga}
\newcommand{\rot}{\mathbf{curl}}
\newcommand{\Rot}{\mathrm{curl}_\Ga}
\newcommand{\Rotv}{\rot_\Ga}
\newcommand{\bR}{\mathbb{R}}
\newcommand{\Z}{\mc Z}
\renewcommand{\b}[1]{\overline{#1}}
\newcommand{\Omext}{\Om_\mathrm{ext}}
\newcommand{\mc}[1]{\mathcal{#1}}
\newcommand{\wt}[1]{\widetilde{#1}}
\newcommand{\vp}{\varphi}
\newcommand{\ld}{\lambda}
\newcommand{\bN}{\mathbb{N}}
\newcommand{\bC}{\mathbb{C}}
\newcommand{\ds}{\,ds}
\newcommand{\dx}{\,dx}
\newcommand{\om}{\omega}
\newtheorem{theorem}{Theorem}[section]
\newtheorem{remark}[theorem]{Remark}
\newtheorem{lemma}[theorem]{Lemma}
\newtheorem{definition_en}[theorem]{Definition}
\newtheorem{hypothese}[theorem]{Hypothesis}
\title{The electromagnetic scattering problem with Generalized Impedance Boundary Conditions }
\author{N. Chaulet\footnote{Department of Mathematics, University College London,  
Gower street, London, WC1E 6BT, UK.}}
\date{}
\begin{document}

\maketitle
%\tableofcontents

\begin{abstract}
In this paper we consider the electromagnetic scattering problem by an obstacle characterised by a Generalized 
Impedance Boundary Condition in the harmonic 
regime. These boundary conditions are well known to provide accurate models for thin layers or imperfectly conducting 
bodies. We give two different formulations 
of the scattering problem and we provide some general assumptions on the boundary condition under which the 
scattering problem has at most one solution. We 
also prove that it is well-posed for three different boundary conditions which involve second order surface differential 
operators under weak sign assumptions on 
the coefficients defining the surface operators.\\[2pt]
\textbf{Keywords:} Maxwell's equations, Generalized Impedance Boundary Conditions, Electromagnetic Scattering, Helmholtz' Decomposition
\end{abstract}

\section{Introduction}
Driven by recent advances in the study of inverse acoustic scattering problems in the presence of so-called generalized 
impedance boundary conditions (see 
\cite{BoChHa11,BoChHa12,CakKre13,ChChHa13}) we study in this paper well-posedness of the forward electromagnetic 
scattering problem in the harmonic regime in the case where the 
scatterer is characterised by a boundary condition of the form
\[
\nn \times \ee + \mc{Z} \hh_T = \ff \quad \text{on } \Ga
\]
where $\Ga$ is the boundary of the scatterer, $\nn$ is the outward unit normal vector to $\Ga$, $\ee$ is the electric field, 
$\hh_T$ stands for the tangential component 
of the magnetic field $\hh$, $\mc{Z}$ is a surface differential operator and $\ff$ is a source term.  This kind of boundary 
conditions, often referred to  as Generalized Impedance Boundary  Condition, are known to provide accurate models for 
all sort of small scale structures. Moreover, in some specific configurations, such as the 
scattering by a perfect 
conductor covered by a thin layer of dielectric or of ferromagnetic material  (see 
\cite{BenLem08,DurHadJol06,HadJol02}), or the scattering by an imperfectly 
conducting body (see \cite{HaJoNg08}),  asymptotic analysis techniques provide an expression for $\Z$ in terms of 
surface differentials operators as well as approximation properties.

In this paper we establish sufficient conditions on the operator $\mc{Z}$ under which the scattering problem is well 
posed. We introduce two different ways of 
writing the problem: the first one (which we will call the volume approach) consists in considering the scattering problem 
as a volume problem and in studying the associated variational formulation.
This path is rather standard and follows the lines of \cite[chapter 10]{Mon03}. We state a general existence and uniqueness result for the scattering problem which uses the volume formulation in Theorem \ref{th:div}. Nevertheless, with these standard 
approach one needs to assume some compatibility between the signs of the surface operator $\Z$ and the sign of the 
volume contribution to the variational formulation to ensure reasonable coercivity properties.  Actually, at least for the 
acoustic scattering problem problem (see \cite{ChChHa13,Ver99}), it seems that such restrictive conditions are not 
needed.  To clarify this point, we consider a different formulation for the scattering problem which consists in 
writing the problem as  a single operator equation posed on the 
boundary of the scatterer. We will call this approach the surface formulation. We indeed show that the scattering problem is equivalent to finding the tangential component of 
the electromagnetic field $\hh$ that solves
\[
(\MtE_\Ga + \Z) \hh_T = \ff\quad \text{on } \Ga
\] 
where $\MtE_\Ga$ is the so-called Magnetic-to-Electric Calder\'on operator (see \cite[chapter 9]{Mon03} for example). In 
the scalar case, it is sufficient to assume that $\mc{Z}$ is a pseudo-differential operator of 
order greater or smaller than $1$ to obtain existence and  uniqueness of the solution to the scattering problem. 

Even though we establish a general existence and uniqueness result for this formulation in Theorem \ref{th:existgen},  the 
situation is more challenging than in the scalar case mainly because the principal part of $\Z$ may have a kernel of infinite dimension. To tackle 
this difficulty, we will introduce a tailored Helmholtz' decomposition on the boundary of the scatterer. This allows for 
example to treat the case of an operator $\Z$ corresponding to the first order impedance boundary condition for thin 
coatings which is given by
\[
\Z = \frac{i\delta}{\om\epsilon} \Rotv\Rot - i\om \mu \delta
\]
where $\Rotv$ and $\Rot$ stand for the surface vectorial and scalar rotational operators, $\epsilon$ and $\mu$ are the 
dielectric constants of the coating 
and $\delta$ is the thickness of the layer. For this operator, the surface approach gives well-posedness regardless the 
sign of $\epsilon$ (which can be negative for metals) whereas the volume approach seems to be limited to positive $
\epsilon$.  

In the next section we introduce notations and recall some important concepts for the study of boundary value problems 
for Maxwell's equations. In the third 
section, we introduce the volume and surface equations and we give general results about existence and uniqueness.  
Finally, the fourth section is dedicated 
to the study of well-posedness for three different surface operators of order $2$, each of them requiring the use of 
different techniques.

\section{Problem setting and variational spaces}
\label{sec:GIBCMaxwell}
Let $\Om$ be a simply connected open bounded domain of $\bR^3$ with $C^{1,1}$ boundary $\Ga$ and  let $\Omext := 
\bR^3\setminus\b{\Om}$ be its 
complementary. We consider the following exterior boundary value problem for the electromagnetic field $(\ee,\hh)$ at 
frequency $\om$:
\begin{equation}
\label{pb:exterior}
\begin{cases}
\rot\ \hh +i\om \ee =0 \quad \text{in} \ \Omext , \\
\rot\ \ee-i\om   \hh=0\quad \text{in} \ \Omext , \\
 \nn \times \ee + \Z \hh_T=\ff\quad \text{on } \Ga
\end{cases}
\end{equation}
where $\nn \in (C^{0,1}(\Ga))^3$ is the outward unit normal to $\Om$, $\hh_T:=(\nn\times\hh)\times\nn$,  $\Z$ is a 
surface differential operator (see Definition 
\ref{def:ZMaxwell}) and $\ff$ is some function defined on $\Ga$. When considering the scattering of an incident wave  $
(\ee^i,\hh^i)$ which is solution to
\[
\rot\ \hh^i +i\om \ee^i =0 \quad \text{and}\quad \rot\ \ee^i-i\om   \hh^i=0\quad \text{in} \ \bR^3,
\]
the right hand side $\ff$ is given by 
\[
\ff := -\left(  \nn \times \ee^i + \Z \hh^i_T \right).
\]
We complement equations \eqref{pb:exterior} with the so-called Silver-M\"uller radiation condition 
\begin{equation}
\label{eq:SM}
    \lim \limits_{R \to \infty} \int_{\partial B_{R}}|\hh \times \hat{x} - (\hat{x}\times\ee )\times \hat{x}|^{2}\ds=0
\end{equation}
where $B_R$ is a ball of radius $R$ and $\hat x:=x/|x|$.

To study equations \eqref{pb:exterior}-\eqref{eq:SM} we  introduce some classical energy spaces and to define specific 
surface differential operators.
We recall hereafter some classical results from \cite[chapter 2]{Ces96} for the convenience of the reader.  Let $\mc{O}$ 
be a generic bounded simply connected open set of $\bR^3$ with 
$C^{1,1}$ boundary $\partial \mc{O}$ and with outer unit normal $\nn$. Let us first introduce the usual energy space $
\hc(\Omext)$ of $(L^2(\mc{O}))^3$ 
distributions with $\rot$ in $(L^2(\mc{O}))^3$ as well as the space of $L^2$ tangential vector fields on $\partial \mc O$:
\[
\L^2_t(\partial \mc O):= \{\vv \in (L^2(\partial \mc O))^3 \ |\ \vv\cdot\nn =0\}.
\]
For $s \in [-1,1]$ we denote $ \H^s_t(\partial \mc O)$ the closure of $\{\vv\in (C^\infty(\partial \mc O))^3 \ |\ \vv\cdot\nn =0 
\}$ in $(H^s(\partial \mc O))^3$.
The tangential trace operators are given for $\vv \in( H^1(\mc O))^3$  by
\[
\ga_t \vp := \nn \times \vp|_{\partial \mc O} \ ,\quad \vv_T=\ga_T(\vv) := (\nn \times \vv|_{\partial \mc O})\times \nn.
\]
These two operators are bounded and linear from $(H^1(\mc O))^3$ into $\L^2_t(\partial \mc O)$. Let us now introduce 
the surface differential operators $
\nabla_{\partial \mc O} : H^1(\partial \mc O) \rightarrow \L^2_t(\partial \mc O)$ and $\rot_{\partial \mc O} : H^1(\partial \mc 
O) \rightarrow \L^2_t(\partial \mc O) $ that 
are given for $u\in H^1(\partial \mc O)$ by
\[
\nabla_{\partial \mc O} u := \ga_T( \nabla \wt u) \quad \text{and} \quad \rot_{\partial \mc O} u := -\nn \times \nabla_{\partial 
\mc O} u
\]
where $\wt u$ is some extension of $u$ to a three dimensional neighbourhood of $\partial \mc O$. We denote their 
adjoints  $-{\rm div}_{\partial \mc{O}} :  
\L^2_t(\partial \mc O)\rightarrow H^{-1}(\partial \mc O)$  and  ${\rm curl}_{\partial \mc{O}} : \L^2_t(\partial \mc O) 
\rightarrow H^{-1}(\partial \mc O)$ that are defined 
for all $u \in H^1(\partial \mc O)$ and $\vv \in \L^2_t(\partial \mc O)$ by 
\begin{align*}
\int_{\partial \mc O} \nabla_{\partial \mc{O}} u\cdot \b\vv \ds = - \<u,\div_{\partial \mc{O}}\vv\>_{H^1(\partial \mc O),H^{-1}
(\partial \mc O)} ,\\
\int_{\partial \mc O} \rot_{\partial \mc{O}} u \cdot \b\vv \ds = \<u,{\rm curl}_{\partial \mc{O}}\vv\>_{H^1(\partial \mc 
O),H^{-1}(\partial \mc O)}.
\end{align*}
The vector operators $\rot_{\partial \mc O}$ and $\nabla_{\partial \mc O}$ can be extended to continuous linear operators 
from $H^s(\partial \mc O)$ into $\H^{s-1}
_t(\partial \mc O)$ while the scalar operators $\div_{\partial \mc O}$ and ${\rm curl}_{\partial \mc O}$ can be extended to 
continuous linear operators from $
\H^s_t(\partial \mc O)$ into $H^{s-1}(\partial \mc O)$ for $s\in [3/2,-1/2]$. 
Moreover,
 \begin{equation}
 \label{eq:rotequality}
 {\rm curl}_{\partial \mc O}\uu := \nn \cdot \rot\ \wt \uu  \quad \text{and} \quad \div_{\partial \mc O}\uu = {\rm curl}_{\partial 
\mc O}(\nn\times \uu) 
 \end{equation}
for all $\uu \in\H^1_t(\partial \mc O)$ where $\wt \uu$ is some extension of $\uu$ to a neighbourhood of $\partial \mc O$.

We conclude this section by introducing the following boundary spaces for $s\in [-1/2,1/2]$:
\begin{align*}
\H^{s}_{\div_{\partial \mc O}}(\partial \mc O) := \{ \vv \in \H^{s}_t(\partial \mc O) \ | \ \div_{\partial \mc O}\vv \in H^{s}
(\partial \mc O)\},\\
\H^{s}_{{\rm curl}_{\partial \mc O}}(\partial \mc O) := \{ \vv \in \H^{s}_t(\partial \mc O)\ |\ {\rm curl}_{\partial \mc O} \vv  \in 
H^{s}(\partial \mc O)\}.
\end{align*}
The specific spaces $\H^{-1/2}_{{\rm curl}_{\partial \mc O}}(\partial \mc O)$ and $\H^{-1/2}_{\div_{\partial \mc O}}(\partial 
\mc O)$ are dual to each other $\L^2_t(\partial \mc O)$ as pivot space and we have the following classical result for the 
tangential trace operators.
\begin{theorem}
The trace operators 
\[
 \ga_t \,:\, \hc(\mc O) \rightarrow \H^{-1/2}_{\div_{\partial \mc O}}(\partial \mc O)\; , \quad \ga_T \,:\, \hc(\mc O) \rightarrow  
\H^{-1/2}_{{\rm curl}_{\partial \mc O}}(\partial 
\mc O)
 \]
are linear continuous and surjective and the following  formula holds for any functions $\uu$ and $\vv$ in $\hc(\mc O)$
 \begin{equation}
 \label{eq:DUAL}
 \int_{\mc O} (\rot\ \uu \cdot \b{ \vv} - \uu\cdot\rot\ \b{\vv})\dx = \<\ga_t(\uu),\ga_T(\vv)\>_{\H^{-1/2}_{\div_{\partial \mc O}}
(\partial \mc O),\H^{-1/2}_{{\rm curl}_{\partial 
\mc O}}(\partial \mc O)}.
 \end{equation}
\end{theorem}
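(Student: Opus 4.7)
The plan is to prove the theorem in four steps: establish Green's formula for smooth data, extend $\ga_t$ and $\ga_T$ to $\hc(\mc O)$ by duality, show that the extended traces lie in the correct divergence/curl-regular subspaces, and finally construct a right inverse to obtain surjectivity.

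For smooth $\uu, \vv \in (C^\infty(\b{\mc O}))^3$, classical integration by parts gives
\[
\int_{\mc O}(\rot\ \uu \cdot \b\vv - \uu\cdot\rot\ \b\vv)\dx = \int_{\partial\mc O}\ga_t(\uu)\cdot\ga_T(\b\vv)\ds.
\]
To extend $\ga_t$ to all $\uu \in \hc(\mc O)$, I would define, for every $\vp \in \H^{1/2}_t(\partial\mc O)$,
\[
\<\ga_t(\uu),\vp\>_{\H^{-1/2}_t(\partial\mc O),\H^{1/2}_t(\partial\mc O)} := \int_{\mc O}\bigl(\rot\ \uu \cdot \b{\wt\vp} - \uu\cdot\rot\ \b{\wt\vp}\bigr)\dx,
\]
where $\wt\vp \in (H^1(\mc O))^3$ is any continuous lifting with $\ga_T(\wt\vp) = \vp$. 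Independence of the lifting follows by applying the smooth identity to $(H^1_0(\mc O))^3$-fields (which have vanishing tangential trace) together with density of smooth compactly supported fields. The Cauchy--Schwarz inequality and continuity of the lifting then yield boundedness of $\ga_t : \hc(\mc O) \to \H^{-1/2}_t(\partial\mc O)$; the extension of $\ga_T$ is obtained analogously, or directly through $\ga_T(\uu) = \ga_t(\uu)\times\nn$.

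Next, to show that $\div_{\partial\mc O}\ga_t(\uu) \in H^{-1/2}(\partial\mc O)$, I would test this extended trace against $\nabla_{\partial\mc O}\psi$ for $\psi \in H^{1/2}(\partial\mc O)$: lifting $\psi$ to $\wt\psi \in H^1(\mc O)$ and choosing $\wt\vp = \nabla\wt\psi$ (whose tangential component is exactly $\nabla_{\partial\mc O}\psi$) kills the $\rot\ \uu$-term since $\rot(\nabla\wt\psi) = 0$, leaving an estimate of the form $|\<\ga_t(\uu),\nabla_{\partial\mc O}\psi\>| \le C\|\uu\|_{\hc(\mc O)}\|\psi\|_{H^{1/2}(\partial\mc O)}$. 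By the adjoint definition of $\div_{\partial\mc O}$, this exhibits $\div_{\partial\mc O}\ga_t(\uu)$ as an element of $H^{-1/2}(\partial\mc O)$. The analogous argument, now using $\rot_{\partial\mc O}\psi = -\nn\times\nabla_{\partial\mc O}\psi$ together with the identity \eqref{eq:rotequality}, establishes $\ga_T(\uu) \in \H^{-1/2}_{{\rm curl}_{\partial\mc O}}(\partial\mc O)$.

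The main obstacle is surjectivity. Given $\mu \in \H^{-1/2}_{\div_{\partial\mc O}}(\partial\mc O)$, one must produce $\uu \in \hc(\mc O)$ with $\ga_t(\uu) = \mu$. A convenient route is to solve a regularised $\rot\rot - \nabla\div$ elliptic system in $(H^1(\mc O))^3$ with tangential data encoded by $\mu$ (introducing a scalar Lagrange multiplier to handle the compatibility condition dictated by $\div_{\partial\mc O}\mu$); the $C^{1,1}$ regularity of $\partial\mc O$ and the simple connectedness assumption are used precisely here to guarantee the requisite $H^1$-regularity of the lift and the Hodge-type decompositions of tangential fields. Once both trace operators are continuous and surjective between the claimed spaces, the duality identity \eqref{eq:DUAL} is obtained for arbitrary $\uu,\vv \in \hc(\mc O)$ by density of $(C^\infty(\b{\mc O}))^3$ in $\hc(\mc O)$ and continuity of both sides in the duality pairing.
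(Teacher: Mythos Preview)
The paper does not actually prove this theorem; it is stated as a classical result recalled from \cite[Chapter~2]{Ces96}, so there is no paper-side argument to compare against. Your four-step outline is the standard route found in that reference and in \cite{Mon03}, and the overall strategy is sound.

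Two remarks on the sketch. In Step~3 you say that taking $\wt\vp=\nabla\wt\psi$ ``kills the $\rot\ \uu$-term''; in fact it is the term $\uu\cdot\rot\ \b{\wt\vp}$ that vanishes (since $\rot\nabla\wt\psi=0$), and the surviving integral $\int_{\mc O}\rot\ \uu\cdot\nabla\b{\wt\psi}\dx$ is then bounded directly via Cauchy--Schwarz and the continuity of the scalar lifting $H^{1/2}(\partial\mc O)\to H^1(\mc O)$, which still gives the estimate you claim. More substantively, for $\psi\in H^{1/2}(\partial\mc O)$ the tangential gradient $\nabla_{\partial\mc O}\psi$ lies only in $\H^{-1/2}_t(\partial\mc O)$, so it is not an admissible test function for the pairing constructed in Step~2, which only pairs $\ga_t(\uu)$ against $\H^{1/2}_t(\partial\mc O)$. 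One fixes this either by arguing first for smooth $\psi$ and passing to the limit, or by using directly the identity \eqref{eq:rotequality} to identify $\div_{\partial\mc O}(\ga_t\uu)$ with (up to sign) $\nn\cdot\rot\ \uu$, which lies in $H^{-1/2}(\partial\mc O)$ as the normal trace of the divergence-free $L^2$ field $\rot\ \uu$. Your surjectivity construction via a regularised elliptic system is one legitimate option among several in the cited references.
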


%\begin{remark}
%All these definitions can be extended to the case of Lipschitz domains and the trace theorem above remains valid (see 
%\cite{BufCia01a,BufCia01b,BuCoSh02}).
%\end{remark}

\section{Study of an abstract boundary value problem}
\label{sec:pbdirectMaxwell}
 Let us denote by $\Vg\subset \L_t^2(\Ga)$ endowed with its inner product $(\cdot,\cdot)_{\Vg}$ a Hilbert space that is 
such that
\begin{equation*}
\{\vv\in (C^\infty(\Ga))^3 \ | \ \vv\cdot\nn=0\} \subset \Vg
\end{equation*}
and such that the injection is dense. Let us denote $\Vg^*$ the dual space of $\Vg$ with respect to $\L^2_t(\Ga)$. The 
impedance operator is defined as follow.
\begin{definition_en}
\label{def:ZMaxwell}
A generalised impedance operator $\Z$  is a linear and bounded operator from $\Vg$ into its dual $\Vg^*$.
\end{definition_en}
Let us define
\[
 \hcext(\Omext) := \{ \vv \in (\mc{D}'(\Omext))^3\ |\ \vp \vv \in \hc(\Omext) \text{ for all } \vp \in \mc{D}(\bR^3)\}
\]
and $V_{\hh}:=\{\hh \in \hcext(\Omext) \ | \ \hh_T \in \Vg\}$, the exterior problem \eqref{pb:exterior} together with the 
radiation condition \eqref{eq:SM} then writes for 
$\ff \in \Vg^*$:
\begin{equation}
\label{pb:GIBCrotVol}
\begin{cases}
\text{Find } (\ee,\hh) \in   \hcext(\Omext) \times V_{\hh} \text{ such that} \\
\rot\ \hh+i\om \ee =0 \quad \text{in}\  \Omext, \\
\rot\ \ee-i\om\hh=0 \quad \text{in} \ \Omext, \\
\nn\times\ee+\Z \hh_T = \ff \quad \text{on} \ \Ga,\\
\displaystyle  \lim \limits_{R \to \infty} \int_{\partial B_{R}}|\hh\times \hat{x} - (\hat{x}\times\ee)\times \hat{x}|^{2}\ds=0
\end{cases}
\end{equation}
To study existence and uniqueness of the solution to \eqref{pb:GIBCrotVol} we have to reformulate these equations in a 
bounded domain. In the following we 
propose two different approaches to achieve this goal. The first is rather classical and consists in bounding the domain $
\Omext$ by introducing a ball that contains 
the domain $\Om$ and by applying a transparent boundary condition on this artificial boundary. The second approach 
consist in writing the system 
\eqref{pb:GIBCrotVol} as a single equation on $\Ga$ by using the so-called Magnetic-to-Electric Calder\'on operator for 
the exterior problem. In Lemma \ref{le:EquiVolSurf} we prove that  these 
two formulations are equivalent.

\subsection{A volume formulation in a bounded domain}
\label{sec:volbouded}
Let $B_R$ be a ball of radius $R$ such that $\Om\subset B_R$, and let us introduce the Magnetic-to-Electric Calder\'on 
operator   $\MtE_R : \H^{-1/2}
_{\div_{\partial B_R}}(\partial B_R) \rightarrow \H^{-1/2}_{\div_{\partial B_R}}(\partial B_R)$ defined for  $\vv \in\H^{-1/2}
_{\div_{\partial B_R}}(\partial B_R) $ by $
\MtE_R\vv := \hat{x}\times \ee$ where $(\ee,\hh) \in \hcext(\bR^3\setminus \b{B_R})\times \hcext(\bR^3\setminus 
\b{B_R})$ is the unique solution (see \cite{ColKre13} for fundamental results about electromagnetic scattering theory) to  
 \begin{equation*}
%\label{pb:MaxwellDir}
\begin{cases}
\rot\ \hh+i\om \ee =0 \quad \text{in}\ \bR^3\setminus \b{B_R}, \\
\rot\ \ee-i\om\hh=0 \quad \text{in} \ \bR^3\setminus \b{B_R}, \\
\hat{x}\times\hh = \vv  \quad \text{on}\  \partial B_R,\\
 \displaystyle\lim \limits_{r \to \infty} \int_{\partial B_{r}}|\hh\times \hat{x} - (\hat{x}\times\ee)\times \hat{x}|^{2}\ds=0.
\end{cases}
\end{equation*}
Let us denote $\Om_R:= B_R\setminus\b{\Om}$ and let us define the Hilbert space $V_{\hh,R}:= \{\vv\in \hc(\Om_R)\ |\  
\vv_T \in \Vg\}$ endowed with the norm
\[
\|\cdot\|_{V_{\hh,R}}:= \|\cdot\|_{\hc(\Om_R)} + \|\cdot\|_{\Vg}.
\]  
Then, for any $\ff\in \Vg^*$, problem \eqref{pb:GIBCrotVol} is equivalent to:
\begin{equation}
\label{pb:GIBCBR}
\begin{cases}
\text{Find}\ (\ee,\hh)\in \hc(\Om_R)\times V_{\hh,R}\ \text{such that} \\
\rot\ \hh+i\om \ee =0 \quad \text{in }  \Om_R, \\
\rot\ \ee-i\om\hh=0 \quad \text{in } \Om_R, \\
\nn\times\ee+\Z \hh_T =\ff \quad \text{on} \ \Ga,\\
\hat x\times\ee -\MtE_R(\hat{x}\times\hh)=0 \quad  \text{on } \partial B_R
\end{cases}
\end{equation}
which is equivalent to find $\hh \in V_{\hh,R}$ such that
\begin{equation}
\label{eq:varformglo}
\begin{aligned}
\int_{\Om_R} \rot\ \hh \cdot \rot\ \b\vv  -\om^2\hh\cdot \b\vv\dx -i\om\<\Z \hh, \vv\>_{\Vg^*,\Vg}&-i\om\int_{\partial B_R} 
\MtE_R(\hat{x}\times\hh)\cdot \b\vv\ ds \\
& = - i\om \< \ff,\vv\>_{\Vg^*,\Vg}
\end{aligned}
\end{equation}
for all $\vv \in V_{\hh,R}$. To ensure weak coercivity  of this variational formulation one has to assume that 
the imaginary part of $\Z$ is negative. In 
fact, this is not always  necessary and we overcome this difficulty by introducing an alternative formulation for 
problem \eqref{pb:GIBCrotVol} in next section.

\subsection{A surface formulation}
Let us introduce the so-called Magnetic-to-Electric Calder\'on operator  $\MtE_\Ga : \H^{-1/2}
_{\Rot}(\Ga) \rightarrow \H^{-1/2}_{\Div}(\Ga)$ defined for $\vv \in  \H^{-1/2}_{\Rot}(\Ga)$ by  $\MtE_\Ga \vv:= \nn\times 
\ee$ where $(\ee,\hh) \in \hcext(\Omext)\times 
\hcext(\Omext)$ is the unique solution to
\begin{equation}
\label{pb:MaxwellDir}
\begin{cases}
\rot\ \hh+i\om \ee =0 \quad \text{in }  \Omext, \\
\rot\ \ee-i\om\hh=0 \quad \text{in }  \Omext, \\
\hh_T = \vv  \quad \mbox{on }  \Ga,\\
 \displaystyle\lim \limits_{R \to \infty} \int_{\partial B_{R}}|\hh\times \hat{x} - (\hat{x}\times\ee)\times \hat{x}|^{2}\ds=0
\end{cases}
\end{equation}
and we recall that this operator is linear and continuous (see \cite{Mon03} for more details). Using $\MtE_\Ga$, problem 
\eqref{pb:GIBCrotVol} can be rewritten in 
these terms:
\begin{equation}
\label{pb:Surface}
\begin{cases}
\text{Find } \uu \in \Vg \cap \hcg^{-1/2}(\Ga)  \text{ such that} \\
 \MtE_{\Ga}(\uu) + \Z\uu = \ff
 \end{cases}
\end{equation}
for $\ff\in (\Vg\cap \hcg^{-1/2}(\Ga))^*$ where $ \Vg \cap \hcg^{-1/2}(\Ga)$ is endowed with the norm
\[
\|\cdot\|_{ \Vg \cap \hcg^{-1/2}(\Ga)} := \|\cdot \|_{\Vg} + \|\cdot\|_{\hcg^{-1/2}(\Ga)}.
\]
Equation \eqref{pb:Surface} makes sense in $(\Vg\cap \hcg^{-1/2}(\Ga))^*$ since $\MtE_\Ga(\uu) \in \H^{-1/2}_{\Div}(\Ga)
$ and this space can be identified with the dual space of  $
\H^{-1/2}_{\Rot}(\Ga)$. Therefore, $\MtE_\Ga(\uu)$ can be identified with a linear and continuous application on $
\H^{-1/2}_{\Rot}(\Ga)$ and consequently on 
$(\Vg\cap \hcg^{-1/2}(\Ga))$.
As stated in the next Lemma, problems \eqref{pb:GIBCrotVol} and \eqref{pb:Surface}  are equivalent.
\begin{lemma}
\label{le:EquiVolSurf}
Let  $\ff$ be in $ (\Vg\cap \hcg^{-1/2}(\Ga))^*$, if $\uu \in \Vg\cap \hcg^{-1/2}(\Ga)$ solves (\ref{pb:Surface}) then  the 
unique solution $(\ee,\hh) \in \hc^\text{ext}(\Omext)\times V_{\hh} $ to 
\eqref{pb:MaxwellDir} with $\vv=\uu$ solves (\ref{pb:GIBCrotVol}).  Conversely, if  $(\ee,\hh)  \in \hc^\text{ext}(\Omext)
\times V_{\hh}$ solves (\ref{pb:GIBCrotVol}) 
then $\hh_{T} \in \Vg\cap \hcg^{-1/2}(\Ga)$ and solves (\ref{pb:Surface}).
\end{lemma}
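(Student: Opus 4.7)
The plan is to prove the two implications directly by exploiting the very definition of the Calderón operator $\MtE_\Ga$: it sends the tangential trace $\hh_T$ of a solution to the exterior Dirichlet problem \eqref{pb:MaxwellDir} onto the corresponding $\nn\times\ee$. Uniqueness of \eqref{pb:MaxwellDir} will be the glue between the two formulations.

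For the direct implication, starting from $\uu\in\Vg\cap\hcg^{-1/2}(\Ga)$ satisfying $\MtE_\Ga\uu+\Z\uu=\ff$, the natural candidate is to define $(\ee,\hh)$ as the unique solution of \eqref{pb:MaxwellDir} with data $\vv=\uu$; this is legitimate because $\uu\in\hcg^{-1/2}(\Ga)=\H^{-1/2}_{\Rot}(\Ga)$ is an admissible Dirichlet datum. Maxwell's equations in $\Omext$ and the Silver--Müller radiation condition then hold by construction, $\hh\in\hcext(\Omext)$, and the trace $\hh_T=\uu$ belongs to $\Vg$, so $\hh\in V_{\hh}$. Finally, the definition of $\MtE_\Ga$ gives $\nn\times\ee=\MtE_\Ga\uu$ on $\Ga$, and substituting into the surface equation proves the boundary condition of \eqref{pb:GIBCrotVol}.

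For the converse, I would start from $(\ee,\hh)\in\hcext(\Omext)\times V_\hh$ solving \eqref{pb:GIBCrotVol}. The membership $\hh_T\in\Vg$ is immediate from $\hh\in V_\hh$. To see that $\hh_T\in\hcg^{-1/2}(\Ga)$, I would localise: picking $\vp\in\mc D(\bR^3)$ with $\vp\equiv 1$ on a neighbourhood of $\b\Om$, the definition of $\hcext$ yields $\vp\hh\in\hc(\Omext)$, hence in $\hc(\Om_R)$ for any large enough $R$. Its tangential trace on $\Ga$ coincides with $\hh_T$, and the trace theorem of Section \ref{sec:GIBCMaxwell} places this trace in $\H^{-1/2}_{\Rot}(\Ga)=\hcg^{-1/2}(\Ga)$. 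Then $(\ee,\hh)$ is a solution of \eqref{pb:MaxwellDir} with data $\vv:=\hh_T$, and uniqueness of the exterior Dirichlet problem (invoked to discard any spurious solution) forces $\nn\times\ee=\MtE_\Ga\hh_T$. Substituting in the generalised impedance boundary condition gives $\MtE_\Ga\hh_T+\Z\hh_T=\ff$, i.e.\ $\uu:=\hh_T$ solves \eqref{pb:Surface}.

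The main obstacle is purely a bookkeeping one: one has to verify that every trace is taken in a space where the operators $\MtE_\Ga$ and $\Z$ are defined, so that the equation $\MtE_\Ga\hh_T+\Z\hh_T=\ff$ is a meaningful identity in $(\Vg\cap\hcg^{-1/2}(\Ga))^*$. The $\Z$-part is handled by $\hh_T\in\Vg$; the subtle point is that the mere a priori regularity $\hh\in\hcext(\Omext)$ is enough, via the localisation argument above, to give $\hh_T$ the additional $\H^{-1/2}_{\Rot}(\Ga)$ regularity required to feed it into $\MtE_\Ga$. Once this compatibility is established, the equivalence is essentially tautological from the definition of $\MtE_\Ga$ and the well-posedness of \eqref{pb:MaxwellDir}.
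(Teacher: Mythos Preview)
Your proposal is correct and follows essentially the same approach as the paper: both directions hinge on the definition of $\MtE_\Ga$ and the well-posedness of the exterior Dirichlet problem \eqref{pb:MaxwellDir}. In fact you are more careful than the paper, which dismisses the converse as ``straightforward'' without spelling out why $\hh_T\in\hcg^{-1/2}(\Ga)$; your localisation argument via $\vp\hh\in\hc(\Omext)$ and the trace theorem fills this in correctly.
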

\begin{proof}
We take $f\in (\Vg\cap \hcg^{-1/2}(\Ga))^*$ and let $\uu \in \Vg\cap \hcg^{-1/2}(\Ga)$ be a solution to (\ref{pb:Surface}). 
We define $(\ee,\hh) \in \hc^\text{ext}(\Omext)\times V_{\hh}$ as being the unique 
solution to  \eqref{pb:MaxwellDir} for $\vv = \uu$ on $\Ga$.  The tangential component of $\hh$ satisfies  $\hh_T=\uu$ on 
$\Ga$ and then $\hh \in V_{\hh}$. Finally, 
since $ \nn\times \ee = \MtE_\Ga (\uu) $ and since $\uu$ solves (\ref{pb:Surface}) we obtain 
\[
\nn\times \ee + \Z \hh_T = f   \quad \text{on } \Ga
\]
which means that  $(\ee,\hh)$  solves \eqref{pb:GIBCrotVol}. 

The reverse statement is straightforward since $ \nn \times \ee=\MtE_{\Ga}(\hh_{T}) $  as soon as $(\ee,\hh)$ solves 
Maxwell's equations outside $\Om$.
\end{proof} 
We therefore obtain that problems \eqref{pb:GIBCrotVol}, \eqref{pb:GIBCBR} and \eqref{pb:Surface} are equivalent. We 
establish now a well-posedness result  for \eqref{pb:GIBCrotVol} which is valid for a general class of operators $\Z$.

\subsection{Existence and uniqueness for a general class of boundary conditions}
First, to ensure uniqueness, we impose a certain absorption condition to be satisfied by the boundary operator $\Z$. In general 
this hypothesis is not restrictive since it 
is linked to some absorption property of the modelled material.
\begin{hypothese}
\label{hyp:uniciteMaxwell}
The operator $\Z$ has a non negative real part, that is:
\[
 \Re \< \Z \vv,\vv\>_{\Vg^*,\Vg} \geq 0
\]
for all $\vv \in \Vg$.
\end{hypothese}
Under this hypothesis we prove uniqueness.
\begin{theorem}
\label{th:MaxUnicite}
If Hypothesis \ref{hyp:uniciteMaxwell} is satisfied then problem (\ref{pb:GIBCrotVol}) has at most one solution.
\end{theorem}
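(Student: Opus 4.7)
The plan is to run the classical Rellich-type uniqueness argument for the exterior Maxwell problem, modified to accommodate the surface operator $\Z$. Let $(\ee,\hh)$ denote the difference of two solutions of \eqref{pb:GIBCrotVol}, so that it solves the homogeneous system with $\ff = 0$; the goal is to show $(\ee,\hh) \equiv 0$ in $\Omext$.

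First I would fix $R > 0$ with $\b{\Om} \subset B_R$ and apply the Green-type identity \eqref{eq:DUAL} in $\Om_R = B_R \setminus \b{\Om}$ with $\uu = \ee$ and $\vv = \hh$. Because $\rot\, \ee = i\om \hh$ and $\rot\, \hh = -i\om \ee$, the volume side equals $i\om(\|\hh\|^2_{L^2(\Om_R)} - \|\ee\|^2_{L^2(\Om_R)})$, which is purely imaginary. The boundary $\partial\Om_R$ splits into $\Ga$ (with outward normal $-\nn$) and $\partial B_R$ (with outward normal $\hat x$); after inserting the boundary relation $\nn \times \ee = -\Z \hh_T$ and taking real parts, Hypothesis \ref{hyp:uniciteMaxwell} yields
\[
\Re \< \hat x \times \ee, \hh_T \>_{\partial B_R} \;=\; -\Re \< \Z \hh_T, \hh_T \>_{\Vg^*,\Vg} \;\leq\; 0.
\]

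Next I would expand the Silver--M\"uller integrand: a standard triple-product manipulation, together with $(\hat x \times \ee) \times \hat x = \ee_T$ and $|\hh \times \hat x|^2 = |\hh_T|^2$, gives
\[
\int_{\partial B_R}(|\hh_T|^2 + |\ee_T|^2) \ds \;=\; \int_{\partial B_R} |\hh \times \hat x - (\hat x \times \ee) \times \hat x|^2 \ds \;+\; 2\Re \< \hat x \times \ee, \hh_T \>_{\partial B_R}.
\]
Letting $R \to \infty$, the first term on the right vanishes by \eqref{eq:SM} while the second is non-positive by the previous step, so $\int_{\partial B_R}(|\hh_T|^2 + |\ee_T|^2)\,ds \to 0$. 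An application of Rellich's lemma for the vector Helmholtz equation followed by unique continuation then forces $(\ee,\hh) \equiv 0$ in $\Omext$.

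The hard part will not be a conceptual obstruction but rather the careful sign bookkeeping in the $\H^{-1/2}$-duality pairings and in the Silver--M\"uller expansion; both reduce to the duality formula \eqref{eq:DUAL} together with elementary triple-product identities, so no serious obstacle is expected.
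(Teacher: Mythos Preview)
Your proof is correct and follows the same route as the paper: integration by parts on $\Om_R$, use of the boundary condition and Hypothesis~\ref{hyp:uniciteMaxwell} to obtain $\Re\int_{\partial B_R}\hat x\times\ee\cdot\b{\hh}\,ds\le 0$, then Rellich and unique continuation. The only difference is cosmetic --- you expand the Silver--M\"uller integrand explicitly to reach the scalar Rellich criterion, whereas the paper invokes the vectorial Rellich lemma from \cite[Lemma~9.28]{Mon03} directly on the sign condition for $\Re\int_{\partial B_R}\hat x\times\ee\cdot\b{\hh}\,ds$.
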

\begin{proof}
Assume that $(\ee,\hh)\in \hcext(\Omext)\times V_\hh$ satisfies \eqref{pb:GIBCrotVol} with $\ff=0$ on $\Ga$. Let $B_R$ 
be a ball of radius $R$ that contains $\b 
\Om$, by using the integration by part formula formula \eqref{eq:DUAL} in $\Omext \cap B_R$ we find that $(\ee,\hh)$ 
satisfies
\begin{align*}
\< \ga_t \ee ,\ga_T \hh \>_{\hdg^{-1/2}(\Ga),\hcg^{-1/2}(\Ga)}-\int_{\partial B_R} \hat{x} \times \ee \cdot\b{\hh} \ds&= 
\int_{\Omext\cap B_R} -\rot\ \ee\cdot \b{\hh} + \ee
\cdot\rot\ \b{\hh} \dx \\
&= -\int_{\Omext\cap B_R} i\om |\hh|^2 +i\om |\ee|^2 \dx.
\end{align*}
By taking the real part of this equality we obtain
\[
\Re\< \ga_t \ee ,\ga_T \hh \>_{\hdg^{-1/2}(\Ga),\hcg^{-1/2}(\Ga)} = \Re\left(\int_{\partial B_R} \hat{x} \times \ee \cdot\b{\hh} 
\ds\right)
\]
and since $ \nn\times \ee = - \Z \hh_T$ on $\Ga$ this relation becomes
\[
-\Re\left\< \Z \hh_T,\hh_T \right \>_{\Vg^*,\Vg}= \Re\left(\int_{\partial B_R} \hat{x} \times \ee \cdot\b{\hh} \, ds\right).
\]
Since we assume that the real part of $\Z$ is non negative, this gives
\[
 \Re\left(\int_{\partial B_R} \hat{x} \times \ee \cdot\b{\hh} \, ds\right) \leq 0
\]
which in regards of Rellich's Lemma (\cite[lemme 9.28]{Mon03}) gives $\ee=\hh=0$ in $\bR^3\setminus\b{B_R}$. The 
unique continuation principle then gives $\ee=
\hh=0$ in $\Omext$ and this concludes the proof.
\end{proof}
As a consequence, to prove that problem \eqref{pb:GIBCrotVol} is well-posed it is sufficient to prove that it can be 
formulated as a  Fredholm type problem. When  $\Vg$ is compactly embedded into $
\hcg^{-1/2}(\Ga)$ the surface formulation \eqref{pb:Surface} allows to prove this property in a straightforward way as soon as  $\Z : 
\Vg \rightarrow \Vg^*$ can be decomposed as the sum of an isomorphism and a compact operator.
\begin{theorem}
\label{th:existgen}
Let $\Z$ be an impedance operator such that Hypothesis \ref{hyp:uniciteMaxwell} is satisfied. If $\Vg$ is compactly 
embedded into $\hcg^{-1/2}(\Ga)$ and $\Z = 
\mc{T} + \mc{K}$ where $\mc{T} : \Vg \rightarrow \Vg^*$ is an isomorphism and $\mc{K} : \Vg \rightarrow \Vg^*$ is a 
bounded and compact operator, then for all $\ff 
\in \Vg^*$ problem \eqref{pb:GIBCrotVol} has a unique solution $(\ee,\hh) \in  \hcext(\Omext) \times V_{\hh} $ and for all ball 
$B_R$ that contains $\b \Om$ it exists 
$C_R>0$ such that
 \[
 \|\ee\|_{\hc(\Om_R)} + \|\hh\|_{V_{\hh,R}}  \leq C_R \|\ff\|_{\Vg^*}.
 \]
\end{theorem}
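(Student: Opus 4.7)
The plan is to reduce to the surface equation \eqref{pb:Surface} via Lemma \ref{le:EquiVolSurf} and to apply the Fredholm alternative on $\Vg$. Since $\Vg$ embeds (compactly, hence continuously) into $\hcg^{-1/2}(\Ga)$, the intersection $\Vg\cap \hcg^{-1/2}(\Ga)$ coincides with $\Vg$ up to equivalence of norms, and any $\ff\in \Vg^*$ defines an element of $(\Vg\cap \hcg^{-1/2}(\Ga))^*$ by restriction. The surface equation then reads
\[
\mc{T}\uu + (\MtE_\Ga + \mc{K})\uu = \ff \quad \text{in } \Vg^*.
\]

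The key claim is that $\MtE_\Ga + \mc{K} : \Vg \to \Vg^*$ is compact. Compactness of $\mc{K}$ is built into the hypothesis, so the content lies in $\MtE_\Ga$. This operator is bounded from $\hcg^{-1/2}(\Ga)$ into $\hdg^{-1/2}(\Ga)$; dualising the continuous injection $\Vg \hookrightarrow \hcg^{-1/2}(\Ga)$ and using that $\hdg^{-1/2}(\Ga)$ is the $\L^2_t(\Ga)$-dual of $\hcg^{-1/2}(\Ga)$ produces a continuous injection $\hdg^{-1/2}(\Ga) \hookrightarrow \Vg^*$. Composing the compact embedding $\Vg \hookrightarrow \hcg^{-1/2}(\Ga)$ with $\MtE_\Ga$ and with this dual inclusion yields a compact map $\Vg \to \Vg^*$. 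Since $\mc{T}$ is an isomorphism, $\mc{T} + \MtE_\Ga + \mc{K}$ is then a compact perturbation of an isomorphism, hence Fredholm of index zero on $\Vg$.

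For uniqueness I would invoke Theorem \ref{th:MaxUnicite}: if $\uu$ solves the homogeneous surface equation, then the construction in the proof of Lemma \ref{le:EquiVolSurf} produces $(\ee,\hh)\in \hcext(\Omext)\times V_\hh$ solving \eqref{pb:GIBCrotVol} with $\ff = 0$, and Hypothesis \ref{hyp:uniciteMaxwell} forces $(\ee,\hh) = 0$, hence $\uu = \hh_T = 0$. The Fredholm alternative therefore gives a unique $\uu \in \Vg$ for every $\ff \in \Vg^*$, together with $\|\uu\|_\Vg \leq C\|\ff\|_{\Vg^*}$ by the open mapping theorem. Reconstructing $(\ee,\hh)$ from $\uu$ by solving \eqref{pb:MaxwellDir} with $\vv = \uu$ and using the continuity of this exterior Dirichlet solution operator on the truncated domain $\Om_R$ finally yields $\|\ee\|_{\hc(\Om_R)} + \|\hh\|_{V_{\hh,R}} \leq C_R\|\ff\|_{\Vg^*}$.

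The main technical obstacle I anticipate is the compactness of $\MtE_\Ga$ as a map into $\Vg^*$ through the duality chain: the hypothesis on $\Vg$ gives only a continuous (not necessarily dense) injection into $\hcg^{-1/2}(\Ga)$, and I would have to verify carefully that this suffices to obtain the continuous injection $\hdg^{-1/2}(\Ga) \hookrightarrow \Vg^*$ used above. Once this duality step is justified, the remainder of the argument follows the standard Fredholm plus uniqueness template.
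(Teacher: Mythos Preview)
Your proposal is correct and follows essentially the same route as the paper: reduce to the surface equation via Lemma~\ref{le:EquiVolSurf}, observe that the compact embedding $\Vg\hookrightarrow\hcg^{-1/2}(\Ga)$ makes $\MtE_\Ga:\Vg\to\Vg^*$ compact, write $\MtE_\Ga+\Z=\mc T+(\MtE_\Ga+\mc K)$ as isomorphism plus compact, and conclude by Fredholm plus Theorem~\ref{th:MaxUnicite}. Regarding the technical worry you flag, note that the paper's standing assumption that smooth tangent fields are dense in $\Vg$ (and hence $\Vg$ is dense in $\hcg^{-1/2}(\Ga)$ once it embeds continuously) takes care of the duality step; in any case, for compactness of $\MtE_\Ga:\Vg\to\Vg^*$ you only need continuity of the map $\hdg^{-1/2}(\Ga)\to\Vg^*$, not injectivity.
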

\begin{proof}
First of all, $\Vg \cap \hcg^{-1/2}(\Ga) = \Vg$ with equivalence of norms and we have equivalence 
between \eqref{pb:GIBCrotVol} and \eqref{pb:Surface} in the sense of Lemma \ref{le:EquiVolSurf}. 
 Let us prove that $\MtE_\Ga +\Z : \Vg \rightarrow \Vg^*$ is of Fredholm type. The operator
  $\MtE_\Ga : \hcg^{-1/2}(\Ga) \rightarrow \Vg^*$ is continuous and therefore 
is compact from $\Vg$ into $\Vg^*$ since we assumed that $\Vg$ is compactly embedded into $\hcg^{-1/2}(\Ga)$. 
Moreover, $\Z = \mc{T} + \mc{K}$ with  $\mc{T} : \Vg \rightarrow \Vg^*$ an isomorphism and $\mc{K} : \Vg \rightarrow 
\Vg^*$
a compact operator and consequently, $\MtE_\Ga +\Z$ is of Fredholm type with index $0$. 
Theorem \ref{th:MaxUnicite} allows to finish the proof.
\end{proof}

When $\Vg$ is not included into $\hcg^{-1/2}(\Ga)$, there is no real advantage in using the surface formulation to 
establish existence and uniqueness of the solution to the scattering problem and we have to impose some restrictions on 
the sign of the imaginary part of the boundary operator to obtain the following theorem.
\begin{theorem}
\label{th:div}
Let $\Z$ be an impedance operator such that Hypothesis \ref{hyp:uniciteMaxwell} is satisfied and such that it exists 
$c>0$ such that
\[
\Im \<\Z \uu,\uu\>_{\Vg^*,\Vg} >c\|\uu\|^2_{\Vg} \quad \forall \; \uu\in \Vg.
\]
Then for all $\ff \in \Vg^*$ problem \eqref{pb:GIBCrotVol} has a unique solution $(\ee,\hh) \in  \hcext(\Omext) \times 
V_{\hh} $ 
and for all ball $B_R$ that contains $\b \Om$ it exists 
$C_R>0$ such that
 \[
 \|\ee\|_{\hc(\Om_R)} + \|\hh\|_{V_{\hh,R}}  \leq C_R \|\ff\|_{\Vg^*}.
 \]
\end{theorem}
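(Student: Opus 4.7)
My plan is to work with the equivalent bounded-domain variational formulation \eqref{eq:varformglo}, establish a G\aa rding inequality for the associated sesquilinear form
\[
a(\hh,\vv) := \int_{\Om_R}(\rot\hh\cdot\rot\b\vv - \om^2\hh\cdot\b\vv)\dx - i\om\<\Z\hh_T,\vv_T\>_{\Vg^*,\Vg} - i\om\int_{\partial B_R}\MtE_R(\hat x\times\hh)\cdot\b\vv\ds,
\]
combine it with a Helmholtz decomposition and a Weber-type compactness theorem to obtain Fredholmness of index zero of the operator $A:V_{\hh,R}\to V_{\hh,R}^*$ associated with $a$, and finally invoke Theorem \ref{th:MaxUnicite} for injectivity plus the open mapping theorem for the continuity estimate.

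Testing $a(\hh,\hh)$ and taking real parts, the hypothesis $\Im\<\Z\uu,\uu\> > c\|\uu\|_\Vg^2$ gives $\om\,\Im\<\Z\hh_T,\hh_T\>\geq c\om\|\hh_T\|_\Vg^2$; the classical energy-flux sign property of the exterior Calder\'on operator $\MtE_R$ on the sphere $\partial B_R$ (see \cite[Ch.~9]{Mon03}) makes $\om\,\Im\int_{\partial B_R}\MtE_R(\hat x\times\hh)\cdot\b\hh\ds$ nonnegative; together with $\|\rot\hh\|^2_{L^2(\Om_R)}\geq 0$, adding $(1+\om^2)\|\hh\|^2_{L^2(\Om_R)}$ to both sides yields the G\aa rding inequality
\[
\Re\, a(\hh,\hh) + (1+\om^2)\|\hh\|^2_{L^2(\Om_R)} \geq \min(1,c\om)\,\|\hh\|^2_{V_{\hh,R}}.
\]
To upgrade this into a Fredholm statement, the $L^2(\Om_R)^3$-contribution must induce a compact operator on $V_{\hh,R}$, which is not automatic since $V_{\hh,R}$ is not directly compactly embedded into $L^2(\Om_R)^3$.

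I would circumvent this via a Helmholtz decomposition $V_{\hh,R} = X\oplus\nabla S$ tailored to the generalized impedance condition, with $S := \{q\in H^1(\Om_R) : q|_{\partial B_R}=0,\ \Grad q\in\Vg\}$ and $X$ the orthogonal complement of $\nabla S$ in the weighted inner product $(\hh,\vv)\mapsto \int_{\Om_R}\hh\cdot\b\vv\dx + (\hh_T,\vv_T)_\Vg$. Elements of $X$ would then be weakly divergence-free in $\Om_R$, satisfy $\ww\cdot\hat x = 0$ on $\partial B_R$, and have tangential trace in $\Vg$, so that a Weber-type compactness argument (using the $C^{1,1}$ regularity of $\Ga$ and the continuous injection $\Vg\hookrightarrow \L^2_t(\Ga)$) yields the compact embedding $X\hookrightarrow L^2(\Om_R)^3$. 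On the gradient piece $\nabla S$ the $\rot$-term drops out and the reduced scalar problem for $p\in S$ becomes Fredholm through the $\Vg$-coercivity of $\Im\Z$ applied to $\Grad p$ combined with Rellich's compact embedding $S\hookrightarrow L^2(\Om_R)$ to absorb the $-\om^2\|\nabla p\|^2_{L^2}$ contribution; the non-coercive part of $\MtE_R$ is treated via its classical splitting into a principal isomorphism plus a compact remainder. Reassembling the pieces gives Fredholmness of $A$ of index zero, and Theorem \ref{th:MaxUnicite} closes the argument. The main obstacle I foresee is precisely the Weber-type compact embedding of $X$: because the defining orthogonality mixes an $L^2$-volume inner product with a $\Vg$-surface inner product, no off-the-shelf compactness result applies directly, and one must verify by hand that this mixed condition is strong enough to encode a distributional divergence-free property, the correct normal trace on $\partial B_R$, and enough tangential regularity on $\Ga$ to trigger compactness in $L^2(\Om_R)^3$.
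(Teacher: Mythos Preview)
Your overall strategy---Helmholtz decomposition of $V_{\hh,R}$, compact embedding of the non-gradient piece into $(L^2(\Om_R))^3$, Fredholm alternative plus Theorem~\ref{th:MaxUnicite}---is exactly the paper's, but your specific decomposition fails at two places. First, with $S=\{q\in H^1(\Om_R):q|_{\partial B_R}=0,\ \Grad q\in\Vg\}$ and $X=(\nabla S)^\perp$ for your mixed inner product, the orthogonality gives $\div\ww=0$ in $\Om_R$ and a relation between $\ww\cdot\nn$ and $\ww_T$ on $\Ga$, but \emph{nothing} on $\partial B_R$: since every $q\in S$ vanishes there, the boundary term on $\partial B_R$ drops out of the integration by parts before it can constrain $\ww\cdot\hat x$. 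The condition $\ww\cdot\hat x=0$ on $\partial B_R$ that you invoke for Weber compactness simply does not follow. Second, your reduced scalar form on $S$ is $-\om^2\int_{\Om_R}\nabla p\cdot\nabla\bar\xi\,dx-i\om\<\Z\Grad p,\Grad\xi\>_{\Vg^*,\Vg}$, and you cannot ``absorb $-\om^2\|\nabla p\|_{L^2}^2$'' via Rellich: the compact embedding $S\hookrightarrow L^2(\Om_R)$ controls $\|p\|_{L^2}$, not $\|\nabla p\|_{L^2}$, which is the principal volume contribution. The $\Vg$-coercivity of $\Im\Z$ controls only the surface quantity $\|\Grad p\|_\Vg$, so the scalar form is neither coercive nor coercive-plus-compact on $S$.

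The paper avoids both issues by reversing the boundary choice: it imposes $p|_\Ga=0$ (its space ``$H_0^1(\Om_R)$''), so that $\Grad p=0$ on $\Ga$ and the $\Z$-term never touches the gradient piece; since the source pairs as $\<\ff,(\nabla\xi)_T\>_{\Vg^*,\Vg}=0$, the scalar equation is homogeneous and forces $p=0$ outright. The $\MtE_R$-term on $\partial B_R$ is instead built into the definition of $X_R$ and of the scalar operator $A_R$; the known positivity/compact splitting of $\MtE_R$ on the sphere (Monk, Lemmas~9.23--9.24 and 10.5) makes $A_R$ an isomorphism and supplies exactly the boundary information on $\partial B_R$ needed for the compact embedding $X_R\hookrightarrow (L^2(\Om_R))^3$. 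On $X_R$ the coercive imaginary part of $\Z$ then combines cleanly with the $\rot$-term and the good part $S_1$ of $\MtE_R$ (this is \eqref{eq:CoercZweak}). In short: put the zero condition on $\Ga$, not on $\partial B_R$, so that $\Z$ lives only on the piece where compactness is available and the well-understood Calder\'on operator handles the gradient piece.
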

 \begin{proof}
 The proof of this result is a slight adaptation of the procedure presented in \cite[chapter 10]{Mon03} and is therefore postponed in appendix.
\end{proof}

\section{Well-posedness for second order surface differential operators}
In this section we will consider three different second order surface differential operators and we will see that each one of 
these operators requires a  different 
treatment. For the first two cases we will prove the Fredholm property of the surface formulation \eqref{pb:Surface} and use Theorem \ref{th:existgen} while 
 in the third case $\Vg$ is not a subspace of $\hcg^{-1/2}(\Ga)$ and we will make use of  Theorem \ref{th:div}.
\subsection{The case of $\Z =\Rotv\eta\Rot + \Grad\ga\Div   + \ld$}
We take $(\ld,\eta,\ga) \in (L^\infty(\Ga))^3$ and we define
\[
 \Z = \Rotv\eta\Rot + \Grad\ga\Div   + \ld
\]
which is a bounded and linear operator from  $\Vg = \hdg^1(\Ga)\cap \hcg^1(\Ga)$ into its dual. The space $\Vg$ is 
endowed with the norm 
\[
\|\cdot\|_{\Vg} := \|\cdot\|_{\hdg^1(\Ga)} +\|\cdot\|_{\hcg^1(\Ga)}
\]
and this space is nothing but  $\H^1_t(\Ga)$ since we have the algebraic relation 
\[
-\vec{\Delta}_\Ga = \Rotv\Rot - \Grad\Div
\]
where $\vec{\Delta}_\Ga$ is the vector Laplace Beltrami operator on $\Ga$.
As a consequence, the embedding of  $\Vg$ in $\hcg^{-1/2}(\Ga)$ and $\L^2_t(\Ga)$ is compact. Therefore we can use 
the surface formulation \eqref{pb:Surface} to 
prove that problem \eqref{pb:GIBCrotVol} is well-posed under the following sign assumptions on $\ld$, $\eta$ and $\ga$.
\begin{hypothese}
\label{hyp:MaxwelldirectGradRot}
The functions $(\ld,\eta,\ga) \in (L^\infty(\Ga))^3$ are such that
\[
 \Re(\ld) \geq 0 \ , \quad \Re(\eta) \geq 0,\quad \Re(\ga) \leq 0 \quad \text{a.e. on } \Ga,
\]
it exists $c>0$ such that
\[
 | \ga | \geq c \ , \quad |\eta| \geq c \quad \text{a.e. on } \Ga
\]
and the imaginary parts of $\ga$ and $\eta$ do not change sign on $\Ga$ and are of opposite sign.
\end{hypothese}
The following theorem is then a consequence of Theorem \ref{th:existgen}.
\begin{theorem}
 If $(\ld,\eta,\ga)$ satisfy Hypothesis \ref{hyp:MaxwelldirectGradRot} then for all $ \ff \in \Vg^*$ problem 
\eqref{pb:GIBCrotVol} with  $\Z = \Rotv\eta\Rot + \Grad\ga
\Div   + \ld$ has a unique solution $(\ee,\hh) $ and for all ball $B_R$ that contains $\b \Om$ it exists $C_R>0$ such that
 \[
 \|\ee\|_{\hc(\Om_R)} + \|\hh\|_{V_{\hh,R}}  \leq C_R \|\ff\|_{\Vg^*}.
 \]
\end{theorem}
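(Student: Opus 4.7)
The plan is to apply Theorem \ref{th:existgen}. The preamble already records that, via the identity $-\vec{\Delta}_\Ga = \Rotv\Rot - \Grad\Div$, the space $\Vg$ coincides with $\H^1_t(\Ga)$ with equivalent norms, and this embeds compactly into $\hcg^{-1/2}(\Ga)$: a bounded sequence in $\H^1_t(\Ga)$ extracts a convergent subsequence in $\H^{-1/2}_t(\Ga)$, and since the corresponding $\Rot$-images are bounded in $L^2(\Ga)$, a further extraction yields convergence in $H^{-1/2}(\Ga)$. It therefore remains to check Hypothesis \ref{hyp:uniciteMaxwell} and to exhibit a decomposition $\Z = \mathcal{T} + \mathcal{K}$ with $\mathcal{T}: \Vg\to\Vg^*$ an isomorphism and $\mathcal{K}: \Vg\to \Vg^*$ bounded and compact.

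The absorption hypothesis follows from a direct integration by parts using the adjoint relations recalled in Section \ref{sec:GIBCMaxwell}, which yields
\begin{equation*}
\<\Z\vv,\vv\>_{\Vg^*,\Vg} = \int_\Ga \eta\,|\Rot\vv|^2\ds - \int_\Ga \ga\,|\Div\vv|^2\ds + \int_\Ga \ld\,|\vv|^2\ds,
\end{equation*}
whose real part is non-negative by the pointwise sign assumptions on $\Re(\eta)$, $\Re(\ga)$ and $\Re(\ld)$.

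The main obstacle is the construction of $\mathcal{T}$: the real parts of $\eta$ and $-\ga$ are only assumed non-negative, so the principal part $\Rotv\eta\Rot + \Grad\ga\Div$ is not directly coercive. I would overcome this by a complex rotation. Under Hypothesis \ref{hyp:MaxwelldirectGradRot}, the functions $\eta$ and $-\ga$ lie almost everywhere in a common closed quadrant of the complex plane (they share the sign of the real part, and since $\Im\eta$ and $\Im\ga$ have opposite constant signs, $\Im\eta$ and $\Im(-\ga)$ share a sign), while $|\eta|,|\ga|\geq c$ keeps them at a definite distance from the origin. Consequently there exists $\theta\in\{-\pi/4,\pi/4\}$ with
\begin{equation*}
\Re(e^{i\theta}\eta) \geq c/\sqrt{2}, \qquad \Re(-e^{i\theta}\ga) \geq c/\sqrt{2} \quad \text{a.e.\ on } \Ga.
\end{equation*}

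With this $\theta$ fixed, introduce on $\Vg$ the sesquilinear form
\begin{equation*}
\tilde a(\uu,\vv) := e^{i\theta}\!\int_\Ga \eta\,\Rot\uu\,\overline{\Rot\vv}\ds - e^{i\theta}\!\int_\Ga \ga\,\Div\uu\,\overline{\Div\vv}\ds + M\!\int_\Ga \uu\cdot\b\vv\ds
\end{equation*}
with $M>0$ to be fixed. The first two terms are bounded below by $(c/\sqrt{2})(\|\Rot\vv\|^2_{L^2} + \|\Div\vv\|^2_{L^2})$, so combining with the last term and the Bochner-type inequality $\|\Rot\vv\|^2_{L^2} + \|\Div\vv\|^2_{L^2} + \|\vv\|^2_{L^2} \gtrsim \|\vv\|^2_{\H^1_t(\Ga)}$ yields, for $M$ large enough, coercivity of $\tilde a$ on $\Vg$. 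Lax-Milgram then gives an isomorphism $\tilde{\mathcal{T}}:\Vg\to\Vg^*$, and the choice $\mathcal{T}:=e^{-i\theta}\tilde{\mathcal{T}}$ remains an isomorphism. The remainder $\mathcal{K}:=\Z-\mathcal{T}$ is the zeroth-order multiplication operator associated with $\ld - e^{-i\theta}M$, which factors through the compact embedding $\Vg \hookrightarrow \L^2_t(\Ga) \hookrightarrow \Vg^*$ and is therefore compact. Theorem \ref{th:existgen} then concludes.
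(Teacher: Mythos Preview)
Your proof is correct and follows essentially the same route as the paper: verify Hypothesis~\ref{hyp:uniciteMaxwell}, use the compact embedding $\Vg=\H^1_t(\Ga)\hookrightarrow\hcg^{-1/2}(\Ga)$, split $\Z$ into a coercive principal part plus a compact zeroth-order remainder, and invoke Theorem~\ref{th:existgen}. The only cosmetic difference is in establishing coercivity of $\mathcal T$: the paper takes $\eta$ as the zeroth-order coefficient and applies the inequality $|z|\geq(|\Re z|+|\Im z|)/\sqrt{2}$ directly to $\langle\mathcal T\uu,\uu\rangle$, whereas you rotate by $e^{\pm i\pi/4}$ and add a constant $M$ --- two equivalent ways of exploiting that $\eta$ and $-\ga$ lie in a common closed quadrant.
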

\begin{proof}
Let us assume that $(\ld,\eta,\ga) \in (L^\infty(\Ga))^3$ satisfy Hypothesis \ref{hyp:MaxwelldirectGradRot}, then  the real 
part of $\Z$ is non negative and from 
Theorem \ref{th:MaxUnicite} we deduce that problem  \eqref{pb:GIBCrotVol} with  $\Z = \Rotv\eta\Rot + \Grad\ga\Div   + 
\ld$ has at most one solution. To prove 
existence we use the surface formulation \eqref{pb:Surface} which is equivalent to \eqref{pb:GIBCrotVol} since $\Vg 
\subset \hcg^{-1/2}(\Ga)$. 
Let us define  the bounded linear operators $T : \Vg \rightarrow \Vg^* $ and  $K : \Vg \rightarrow \Vg^*$ by
 \[
  \< T \vv,\ww\>_{\Vg^*,\Vg} := \int_\Ga \eta\ \Rot \vv\ \Rot\b{\ww}\ds - \int_\Ga \ga\ \Div \vv\ \Div\b{\ww} \ds + \int_\Ga \eta\ 
\vv\cdot\b{\ww} \ds,
 \]
\[
  \< K \vv,\ww\>_{\Vg^*,\Vg} :=  \int_\Ga (\ld-\eta)\ \vv\cdot\b{\ww} \ds
 \]
 for all $\vv$ and $\ww$ in $\Vg$ and then $\Z = T + K$.  We recall that for any complex number $z = a+ ib \in \bC$ we 
have
 \begin{equation}
 \label{eq:modulecomplexe}
 |z| \geq \frac{|a| + |b|}{\sqrt{2}}
 \end{equation}
 and since $(\ld,\eta,\ga)$ satisfy Hypothesis  \ref{hyp:MaxwelldirectGradRot}, this last inequality implies that the operator 
$T$ is coercive on $\Vg$ i.e. it exists 
$C>0$ such that
 \[
| \<T\uu,\uu\>_{\Vg^*,\Vg}| \geq C \|\uu\|^2_{\Vg} \quad \forall \ \uu \in \Vg.
 \]
Moreover, since the embeddings of $\Vg$ into  $\L^2_t(\Ga)$ and $\hcg^{-1/2}(\Ga)$ are compact, we deduce that $K : 
\Vg \rightarrow \Vg^*$ and $\MtE_\Ga : \Vg 
\rightarrow \Vg^*$ are compact operators. Then, Theorem \ref{th:existgen}  concludes the proof.
\end{proof}
\begin{remark}
Using formulation \eqref{pb:Surface} instead of formulation \eqref{pb:GIBCBR} to prove existence and uniqueness of the 
solution to the scattering problem allows 
us to treat the case of  coefficients $\eta$ and $\ga$ with positive and negative imaginary parts respectively. This could 
not be achieved with standard 
variational arguments on the variational formulation \eqref{eq:varformglo} associated with the volume problem \eqref{pb:GIBCBR}.
\end{remark}

\subsection{The case of  $\Z = \Rotv\eta\Rot + \ld$}
\label{SecCurl}
Let $\ld$ and $\eta$ be two $L^\infty(\Ga)$ functions and let us define
\[
 \Z = \Rotv\eta\Rot + \ld
\]
which is bounded and continuous from $\Vg = \hcg^1(\Ga)$ into its dual. We assume that $\ld$ and $\eta$ satisfy the 
following sign hypothesis.
\begin{hypothese}
\label{hyp:Maxwelldirect}
The functions $(\ld,\eta) \in (L^\infty(\Ga))^2$ are such that
\[
 \Re(\ld) \geq 0 \ , \quad \Re(\eta) \geq 0 \quad \text{a.e. on } \Ga,
\]
it exists $c>0$ such that
\[
 | \ld | \geq c \ , \quad |\eta| \geq c \quad \text{a.e. on } \Ga
\]
and the imaginary parts of  $\ld$ and  $\eta$ do not change sign on $\Ga$.
\end{hypothese}
First of all, if the imaginary parts of $\ld$ and $\eta$ are of the same sign then the situation is very similar to the one in 
the previous section. Actually, $\Vg$ is 
compactly embedded into $\hcg^{-1/2}(\Ga)$ and we can use the surface formulation \eqref{pb:Surface} to prove that 
problem \eqref{pb:GIBCrotVol} is well-posed. 
Indeed, in this case $\Z : \Vg \rightarrow \Vg^*$ is coercive and since $
\MtE_\Ga : \Vg \rightarrow \Vg^*$ is compact 
we deduce well-posedness from the uniqueness Theorem \ref{th:MaxUnicite}.

If this is not the case, that is if the imaginary parts of $\ld$ and $\eta$ are of opposite sign then we have to be much more 
careful to prove existence of a solution to 
\eqref{pb:GIBCrotVol} (uniqueness is ensured by Theorem \ref{th:MaxUnicite}).
As mentioned in the introduction, this happens for example in the case where $\Z$ models a thin layer of metal.  Actually, the $\ld $ part of the impedance operator has to be treated as a compact perturbation of the $\Rotv \eta \Rot$  operator but it not true. Actually, similarly to  the 
volume spaces, $\hcg^1(\Ga)$ is not compactly embedded into $\L^2_t(\Ga)$ since, for example, for all $p\in H^1(\Ga)$ 
we have
\[
\Rot\nabla_\Ga p=0.
\] 
Nevertheless, we prove in what follows that $ \Z + \MtE_\Ga :  \Vg\rightarrow \Vg^*$ is an isomorphism by using a 
Helmholtz' decomposition of $\Vg$. Before giving 
the actual decomposition we need to introduce some additional notations. For  any $\ff \in \Vg^*$, let us define the 
sesquilinear form $a_\Ga$ on $\Vg\times \Vg$ 
and the anti-linear form $l_\Ga$ on $\Vg$ by 
\[
a_\Ga(\uu,\vv) := \int_\Ga( \eta\Rot \uu \  \Rot \b{\vv} +\ld \uu\cdot\b{\vv} )\ds+ \<\MtE_{\Ga}(\uu) ,\vv\>_{\hdg^{-1/2}(\Ga),
\hcg^{-1/2}(\Ga)} \quad \forall \  (\uu,\vv) \in 
(\Vg)^2,
\]
\[
l_\Ga(\vv) := \<\ff,\vv\>_{\Vg^*,\Vg} \quad \forall \ \vv \in \Vg.
\] 
Then, $\uu_\Ga \in \Vg$ solves $\eqref{pb:Surface}$ if and only if
\[
a_\Ga(\uu_\Ga,\vv) = l_\Ga(\vv)
\]
for all $\vv\in \Vg$.

Let us define
\[
 \mathring H^1(\Ga) :=\left\{ p \in H^1(\Ga)\,|\, \int_{\Ga}p\ds =0\right\}
\]
the space of  $H^1(\Ga)$ functions with zero mean on  $\Ga$ endowed with the $H^1(\Ga)$ norm and 
\[
 X := \left\{ \vv \in \Vg \,|\, \int_{\Ga}\ld   \vv\cdot\nabla_\Ga\overline{\xi}\ds + \<\MtE_\Ga(\vv),\nabla_\Ga \xi\>_{\hdg^{-1/2}
(\Ga),\hcg^{-1/2}(\Ga)} =0 
 \quad \forall\  \xi \in  \mathring H^1(\Ga) \right\}
\]
endowed with the $\hcg^1(\Ga)$ norm. These two spaces are Hilbert spaces. We prove in Lemma \ref{le:HelmSurf} 
that
\[
\Vg =  \Grad \mathring H^1(\Ga) \oplus X
\]
and in Lemma \ref{le:CompactSurf} that the embedding of $X$ in $\L^2_t(\Ga)$ is compact. To this end, let us first 
introduce the operator $A_S :  \mathring 
H^1(\Ga)  \rightarrow  \mathring H^1(\Ga) $ defined by
 \begin{align*}
  (A_S p,\xi)_{H^1(\Ga)} &:=a_\Ga(\nabla_\Ga p,\nabla_\Ga\xi) \\
  &= \int_\Ga  \ld \nabla_\Ga p\cdot\nabla_\Ga\overline{\xi}\ds + \<\MtE_\Ga(\nabla_\Ga p),\nabla_\Ga \xi\>_{\hdg^{-1/2}
(\Ga),\hcg^{-1/2}(\Ga)}   \end{align*}
for all $ (p,\xi) \in  (\mathring H^1(\Ga))^2$. According to the next lemma, $A_S$ is an isomorphism.
 \begin{lemma}
 \label{le:ASiso}
If $\ld$ satisfies Hypothesis \ref{hyp:Maxwelldirect} then $A_S :  \mathring H^1(\Ga)  \rightarrow  \mathring H^1(\Ga) $ is 
an isomorphism.
 \end{lemma}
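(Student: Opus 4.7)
The plan is to show that $A_S$ is Fredholm of index zero and injective, so as to conclude that it is an isomorphism via the Fredholm alternative. To this end I would split $A_S = T_\ld + K_{\mc{S}}$, where
\[
(T_\ld p,\xi)_{H^1(\Ga)} := \int_\Ga \ld\,\nabla_\Ga p\cdot\nabla_\Ga\b\xi\,ds\quad\text{and}\quad (K_{\mc{S}} p,\xi)_{H^1(\Ga)} := \<\MtE_\Ga(\nabla_\Ga p),\nabla_\Ga \xi\>_{\hdg^{-1/2}(\Ga),\hcg^{-1/2}(\Ga)},
\]
and then prove that $T_\ld$ is coercive on $\mathring H^1(\Ga)$ while $K_{\mc{S}}$ is compact.

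For the coercivity of $T_\ld$, I would test against $p$ and apply the elementary inequality \eqref{eq:modulecomplexe} to the complex number $\int_\Ga \ld\,|\nabla_\Ga p|^2\ds$: under Hypothesis \ref{hyp:Maxwelldirect} its real part is nonnegative and its imaginary part has constant sign, which combined with $|\ld|\ge c$ gives
\[
\left|\int_\Ga \ld\,|\nabla_\Ga p|^2\ds\right|\ge \frac{c}{\sqrt{2}}\int_\Ga|\nabla_\Ga p|^2\ds.
\]
The Poincar\'e inequality on $\mathring H^1(\Ga)$ then upgrades this to $|(T_\ld p,p)_{H^1(\Ga)}|\ge C\|p\|^2_{H^1(\Ga)}$, so $T_\ld$ is an isomorphism.

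For the compactness of $K_{\mc{S}}$, the crucial observation is that $\nabla_\Ga: \mathring H^1(\Ga)\to \hcg^{-1/2}(\Ga)$ is compact. Indeed, since $\Rot\nabla_\Ga p = 0$, the $\hcg^{-1/2}(\Ga)$ norm of $\nabla_\Ga p$ coincides with its $\H^{-1/2}_t(\Ga)$ norm, so one can factor $\nabla_\Ga: \mathring H^1(\Ga)\to \L^2_t(\Ga)$ through the compact Rellich embedding $\L^2_t(\Ga)\hookrightarrow \H^{-1/2}_t(\Ga)$. Composition with the bounded Calder\'on operator $\MtE_\Ga: \hcg^{-1/2}(\Ga)\to \hdg^{-1/2}(\Ga)$ then shows $K_{\mc{S}}$ is compact, and $A_S$ is Fredholm of index zero.

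For injectivity, suppose $A_S p = 0$ and test with $\xi = p$ to obtain $\int_\Ga\ld|\nabla_\Ga p|^2\ds + \<\MtE_\Ga(\nabla_\Ga p),\nabla_\Ga p\> = 0$. Letting $(\ee,\hh)$ denote the solution of \eqref{pb:MaxwellDir} with $\vv=\nabla_\Ga p$, the second term equals $\<\ga_t\ee,\ga_T\hh\>_{\hdg^{-1/2}(\Ga),\hcg^{-1/2}(\Ga)}$, and applying the Green formula \eqref{eq:DUAL} in $\Omext\cap B_R$ exactly as in the proof of Theorem \ref{th:MaxUnicite} yields $\Re\<\ga_t\ee,\ga_T\hh\> = \Re\int_{\partial B_R}\hat x\times\ee\cdot\b\hh\ds$. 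Combined with $\Re(\ld)\ge 0$, this forces $\Re\int_{\partial B_R}\hat x\times\ee\cdot\b\hh\ds\le 0$, and Rellich's Lemma together with unique continuation give $(\ee,\hh)\equiv 0$ in $\Omext$; hence $\nabla_\Ga p = \hh_T = 0$ and $p = 0$ by the zero-mean condition. The main subtlety is really the compactness step for $K_{\mc{S}}$: since $\MtE_\Ga$ is only of order zero, no smoothing is provided by the Calder\'on operator itself, and one must exploit the curl-free structure of gradients, which is precisely why restricting to the subspace $\Grad\mathring H^1(\Ga)$ of the Helmholtz decomposition is effective.
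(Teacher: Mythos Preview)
Your proof is correct and follows essentially the same strategy as the paper: a Fredholm-plus-injectivity argument with the same key observation that $\Rot\nabla_\Ga p=0$ reduces the $\hcg^{-1/2}$ norm to the $\H^{-1/2}_t$ norm, making the Calder\'on contribution compact, and the same Rellich/unique continuation argument for injectivity. The only cosmetic difference is in the splitting: the paper adds and subtracts $\int_\Ga \ld\,p\,\b\xi\,ds$ so that its coercive part $C_S$ controls the full $H^1(\Ga)$ norm directly, whereas you keep only the gradient term in $T_\ld$ and invoke the Poincar\'e inequality on $\mathring H^1(\Ga)$ to recover the $L^2$ part; both are equally valid and lead to the same conclusion.
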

\begin{proof}
Let $\ld \in L^\infty(\Ga)$ be such that Hypothesis \ref{hyp:Maxwelldirect}  is satisfied. Let $C_S$ and $K_S$ be the 
two bounded operators from  $ \mathring H^1(\Ga) $ 
into $ \mathring H^1(\Ga) $ defined by
\[
 (C_S p,\xi)_{H^1(\Ga)} = \int_\Ga  \ld (\nabla_\Ga p\cdot\nabla_\Ga\overline{\xi} + p\overline{\xi})\ds
 \quad\forall\, (p,\xi) \in  (\mathring H^1(\Ga))^2,
\]
\[
 (K_S p,\xi)_{H^1(\Ga)} =-\int_\Ga  \ld \ p\ \overline{\xi}\ds + \<\MtE_\Ga(\nabla_\Ga p),\nabla_\Ga \xi\>_{\hdg^{-1/2}(\Ga),
\hcg^{-1/2}(\Ga)}
 \quad\forall\, (p,\xi) \in  (\mathring H^1(\Ga) )^2,
\]
then $A_S=C_S+K_S$. First of all, from \eqref{eq:modulecomplexe} and since the imaginary part of  $\ld$ does not 
change sign, we have for all $p \in  \mathring 
H^1(\Ga) $
\[
 | (C_S p,p)_{H^1(\Ga)} | \geq \frac{1}{\sqrt{2}}\int_\Ga  [\Re(\ld)+|\Im(\ld)|]( |\nabla_\Ga p|^2 + |p|^2)\ds \geq  \frac{c}
{\sqrt{2}} \|p\|_{H^1(\Ga)}
\]
where $c$ is the lower bound on the modulus of $\ld$ and $c>0$ from Hypothesis \ref{hyp:Maxwelldirect}. Hence $C_S$ 
is an isomorphism from Lax-Milgram 
Lemma.

We prove that $K_S : \mathring H^1(\Ga)  \rightarrow  \mathring H^1(\Ga) $ is compact. Let $(p_n)_n$ be a bounded 
sequence of $ \mathring H^1(\Ga) $, let us 
prove that we can extract from $(K_S p_n)_n$ a subsequence that converges in $ \mathring H^1(\Ga) $. 
From the definition of $K_S$ and using the continuity of $\MtE_\Ga : \hcg^{-1/2}(\Ga) \rightarrow \hdg^{-1/2}(\Ga) $ we 
deduce that it exists a constant $C>0$ such 
that for all  $n\in \bN$ we have
\[
 \|K_S p_n\|^2_{H^1(\Ga)} \leq \|\ld\|_{L^\infty(\Ga)} \|p_n\|_{L^2(\Ga)}\|K_S p_n\|_{L^2(\Ga)} + C \|\nabla_\Ga p_n\|
_{\hcg^{-1/2}(\Ga)}\|\nabla_\Ga(K_S p_n)\|
_{\hcg^{-1/2}(\Ga)}.
\]
But since $\Rot(\nabla_\Ga p_n)=0$ we deduce that $\|\nabla_\Ga p_n\|_{\hcg^{-1/2}(\Ga)} = \|\nabla_\Ga p_n\|
_{\H_t^{-1/2}(\Ga)}$. Similarly, we obtain that  it 
exists $C>0$ such that for all  $n\in \bN$:
\[
    \|\nabla_\Ga(K_S p_n)\|_{\hcg^{-1/2}(\Ga)}=  \|\nabla_\Ga(K_S p_n)\|_{\H_t^{-1/2}(\Ga)}\leq C \|K_S p_n\|_{H^{1/2}(\Ga)}.                                                                                                                                                                                                                                                           
\]
Therefore, we obtain that it exists $C>0$ such that for all  $n\in \bN$:
\begin{equation}
\label{eq:borneKpn}
 \|K_S p_n\|_{H^1(\Ga)} \leq C\left( \|p_n\|_{L^2(\Ga)} +  \|\nabla_\Ga p_n\|_{\H^{-1/2}_t(\Ga)}\right) \leq C  \|p_n\|
_{H^{1/2}(\Ga)}.
\end{equation}
We recall that the sequence $(p_n)_n$ is bounded in $H^1(\Ga)$ and therefore one can extract from $(p_n)_n$ a 
subsequence still denoted by $(p_n)_n$ that is 
of Cauchy type in $H^{1/2}(\Ga)$. This observation together with inequality \eqref{eq:borneKpn} implies that $K_S$ is 
compact and therefore that $A_S= C_S + K_S$ is of 
Fredholm type with index $0$ since $C_S$ is an isomorphism.

To conclude the proof, let us prove that $A_S$ is injective. We take 
 $p \in \mathring H^1(\Ga)$ such that $A_Sp=0$. We then have
\begin{equation}
\label{EqInj}
 \int_\Ga  \ld |\nabla_\Ga p|^2\ds + \<\MtE_\Ga(\nabla_\Ga p),\nabla_\Ga p\>_{\hdg^{-1/2}(\Ga),\hcg^{-1/2}(\Ga)} =0.
\end{equation}
Let $(\ee,\hh)$ be the unique solution to \eqref{pb:MaxwellDir} with  $\vv= \nabla_\Ga p$ 
on $\Gamma$. As in the proof of Theorem \ref{th:MaxUnicite} we get by integration by part that
\[
\Re\< \nn\times \ee, \hh_T\>_{\hdg^{-1/2}(\Ga),\hcg^{-1/2}(\Ga)}= \Re\left(\int_{\partial B_R} \hat{x} \times \ee \cdot\b{\hh} 
\ds\right)
\]
for all ball $B_R $ that is such that $\b{\Om} \subset B_R$. Since $\Re(\ld) \geq 0 $, this last inequality together with Rellich's 
Lemma and the unique continuation 
principle implies that 
 $\ee=\hh=0$ in $\Omext$ and as a consequence $\nabla_\Ga p=0$. 
 Since $p$ has a zero mean on $\Ga$, this implies $p=0$ which concludes the proof.
\end{proof}
We make use of the isomorphism $A_S$ to prove the following Helmholtz' decomposition.
\begin{lemma}
\label{le:HelmSurf}
If $\ld$ satisfies Hypothesis \ref{hyp:Maxwelldirect} then $\Vg$ writes as the direct sum 
of $\nabla_\Ga  \mathring H^1(\Ga) $ and  $X$:
 \[
\Vg = \nabla_\Ga  \mathring H^1(\Ga) \oplus X,
 \]
and there exists $C>0$ such that
\[
 \|\ww\|_{\Vg}+\|\nabla_\Ga p\|_{\Vg} \leq C\|\nabla_\Ga p+ \ww\|_{\Vg}
\]
for all $\ww \in X$ and $p\in  \mathring H^1(\Ga) $.
\end{lemma}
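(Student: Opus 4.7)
The plan is to construct an explicit continuous projection onto $\nabla_\Ga \mathring H^1(\Ga)$ by leveraging the isomorphism $A_S : \mathring H^1(\Ga) \rightarrow \mathring H^1(\Ga)$ provided by Lemma~\ref{le:ASiso}. Given $\uu \in \Vg$, I would seek $p_\uu \in \mathring H^1(\Ga)$ such that $\ww_\uu := \uu - \nabla_\Ga p_\uu$ belongs to $X$; unfolding the defining condition of $X$, this amounts to requiring that for every $\xi \in \mathring H^1(\Ga)$,
\[
(A_S p_\uu,\xi)_{H^1(\Ga)} = \int_\Ga \ld \, \uu\cdot\nabla_\Ga\bar\xi\,ds + \<\MtE_\Ga(\uu),\nabla_\Ga \xi\>_{\hdg^{-1/2}(\Ga),\hcg^{-1/2}(\Ga)} =: \ell_\uu(\xi).
\]
Denoting by $F_\uu \in \mathring H^1(\Ga)$ the Riesz representative of the anti-linear form $\ell_\uu$, the prescription $p_\uu := A_S^{-1} F_\uu$ then yields the required potential, and the decomposition $\uu = \nabla_\Ga p_\uu + \ww_\uu$ follows by construction.

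The heart of the matter is to check that $\ell_\uu$ is indeed continuous on $\mathring H^1(\Ga)$ with a constant controlled by $\|\uu\|_{\Vg}$; this is the main obstacle. The first integral is immediately bounded by $\|\ld\|_{L^\infty(\Ga)}\|\uu\|_{\L^2_t(\Ga)}\|\xi\|_{H^1(\Ga)}$. The dual pairing is more delicate, but $\Rot(\nabla_\Ga\xi)=0$ implies that $\|\nabla_\Ga\xi\|_{\hcg^{-1/2}(\Ga)}$ reduces to $\|\nabla_\Ga\xi\|_{\H^{-1/2}_t(\Ga)}$, which is in turn controlled by $\|\xi\|_{H^{1/2}(\Ga)} \leq \|\xi\|_{H^1(\Ga)}$; combined with the continuity of $\MtE_\Ga : \hcg^{-1/2}(\Ga)\rightarrow\hdg^{-1/2}(\Ga)$ and the embedding $\Vg \hookrightarrow \hcg^{-1/2}(\Ga)$, this gives $|\ell_\uu(\xi)| \leq C\|\uu\|_{\Vg} \|\xi\|_{H^1(\Ga)}$. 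Riesz representation together with the boundedness of $A_S^{-1}$ then delivers $\|p_\uu\|_{H^1(\Ga)} \leq C\|\uu\|_{\Vg}$, and the triangle inequality produces the desired norm estimate $\|\nabla_\Ga p_\uu\|_{\Vg} + \|\ww_\uu\|_{\Vg} \leq C \|\uu\|_{\Vg}$.

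To finish, I would verify that the sum is direct. If $\nabla_\Ga p + \ww = 0$ with $p \in \mathring H^1(\Ga)$ and $\ww \in X$, then $-\nabla_\Ga p \in X$, so testing the defining relation of $X$ against an arbitrary $\xi \in \mathring H^1(\Ga)$ yields $(A_S p,\xi)_{H^1(\Ga)}=0$ for all such $\xi$; hence $A_S p = 0$, and the injectivity of $A_S$ forces $p=0$ and $\ww=0$. Combining existence, uniqueness, and the continuity of the projection $\uu \mapsto \nabla_\Ga p_\uu$ established above yields both the direct-sum decomposition and the stated norm inequality.
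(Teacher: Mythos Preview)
Your proposal is correct and follows essentially the same route as the paper: define the Riesz representative $F_\uu$ of $\ell_\uu$, set $p_\uu=A_S^{-1}F_\uu$ via Lemma~\ref{le:ASiso}, check that $\ww_\uu=\uu-\nabla_\Ga p_\uu\in X$, and conclude directness from the injectivity of $A_S$. Your explicit verification that $\ell_\uu$ is bounded on $\mathring H^1(\Ga)$ (using $\Rot\nabla_\Ga\xi=0$ to reduce $\|\nabla_\Ga\xi\|_{\hcg^{-1/2}(\Ga)}$ to $\|\nabla_\Ga\xi\|_{\H_t^{-1/2}(\Ga)}$) is a detail the paper leaves implicit.
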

\begin{proof}
Let us  take 
$\uu  \in \Vg$,  and let us define $F$ as being the unique function of $ \mathring H^1(\Ga) $ that satisfies
\[
(F,\xi)_{H^1(\Ga)} = \int_\Ga  \ld\uu \cdot\nabla_\Ga\overline{\xi}\ds + \<\MtE_\Ga(\uu),\nabla_\Ga \xi\>_{\hdg^{-1/2}(\Ga),
\hcg^{-1/2}(\Ga)}\quad \forall \ \xi \in  
\mathring H^1(\Ga) .
\]
Since $A_S : \mathring H^1(\Ga) \rightarrow \mathring H^1(\Ga)$ is an isomorphism (Lemma \ref{le:ASiso}), it exists a 
unique  $p \in  \mathring H^1(\Ga) $ such 
that $A_Sp = F$ and it exists $C>0$  such that
\begin{equation}
\label{eq:contp}
\|p\|_{H^1(\Ga)} \leq C \|\uu\|_{\Vg}.
\end{equation}
Let us define $\ww:=\uu-\nabla_\Ga p$,  from the definition of  $A_S$ and $p$, we have
\[
\int_\Ga \ld \ww \cdot\nabla_\Ga\overline{\xi}\ds + \<\MtE_\Ga(\ww),\nabla_\Ga \xi\>_{\hdg^{-1/2}(\Ga),\hcg^{-1/2}(\Ga)} = 
(F- A_Sp,\xi)_{H^1(\Ga)} = 0\quad \forall \ 
\xi \in  \mathring H^1(\Ga) 
\]
whence, $\ww \in X$ and by \eqref{eq:contp} we have the following continuity relation
\[
 \|\ww\|_{\Vg}+\|\nabla_\Ga p\|_{\Vg}  \leq  \|\uu\|_{\Vg} + 2 \| \nabla_\Ga p\|_{\Vg} \leq (2C +1) \|\uu\|_{\Vg}
\]
for $C>0$.  We have then proven that for any  $\uu \in \Vg$  it exists 
$p\in  \mathring H^1(\Ga) $ and $\ww \in X$ such that $\uu=\nabla_\Ga p +\ww$. 
We now only have to prove that the sum between $\mathring H^1(\Ga) $ and $X$ is direct. For $\uu=\nabla_\Ga p \in X 
\cap \nabla_\Ga  \mathring H^1(\Ga) $ we 
have $A_Sp=0$ since $\uu \in X$. Hence $p=\uu=0$ since $A_S$ is injective. This concludes the proof.
\end{proof}

In order to prove the compact embedding of $X$ into $\L^2_t(\Ga)$ we need some classical regularity properties for 
Maxwell's equations that we recall in Theorem 
\ref{th:RegTr} (see \cite{Cos90} for a proof of this result in Lipschitz domains) as well as the compactness result 
established in Lemma  \ref{le:DivCurlCompact}.
\begin{theorem}
\label{th:RegTr}
Let $\mathcal{O} \in \bR^3$ be a bounded simply connected domain with  Lipschitz boundary. Let us assume that $\uu \in 
\hc(\mathcal{O})$ is such that $\div (\uu) 
\in L^2(\mathcal{O})$ and  $\uu_T \in \L^2_t(\partial\mathcal{O})$, then it exists $C>0$ such that
\[
   \|\uu  \|_{(H^{1/2}(\mathcal{O}))^3} \leq C[\|\uu\|_{\hc(\mathcal{O})} + \|\div(\uu)\|_{(L^2(\mathcal{O}))^3} + \|\uu_T\|
_{\L^2_t(\partial \mathcal{O})}]
\]
and
\[
   \|\uu\cdot\nn  \|_{L^2(\partial O)} \leq C[\|\uu\|_{\hc(\mathcal{O})} + \|\div(\uu)\|_{(L^2(\mathcal{O}))^3} + \|\uu_T\|
_{\L^2_t(\partial \mathcal{O})}].
\]
\end{theorem}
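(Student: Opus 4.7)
The strategy is to establish the interior $H^{1/2}$ estimate by Helmholtz-decomposing $\uu$ into pieces governed by elliptic subproblems and applying sharp Lipschitz-boundary regularity, then to deduce the $L^2$ bound on $\uu\cdot\nn$ via a Rellich--Payne--Weinberger multiplier identity. The Lipschitz (rather than smooth) nature of $\mc{O}$ is exactly what prevents the classical $H^1$ conclusion and makes the $H^{1/2}$ bound the sharp statement.

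For the first estimate I would write $\uu = \nabla \vp + \rot\,\mathbf{A}$ in $\mc{O}$, where $\vp \in H^1(\mc{O})/\bR$ solves the Neumann problem $\Delta \vp = \div\,\uu$ with $\partial_\nn \vp = \uu\cdot\nn$ (the normal trace being defined in $H^{-1/2}(\partial \mc{O})$ via Green's formula), and $\mathbf{A}$ is a Coulomb-gauged vector potential ($\div\,\mathbf{A}=0$, $\mathbf{A}\cdot\nn=0$) for the residual divergence-free, normally vanishing field $\uu - \nabla \vp$, so that $\rot\,\rot\,\mathbf{A} = \rot\,\uu$ with tangential boundary datum inherited from $\uu_T - (\nabla\vp)_T$. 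Both subproblems have bulk data in $L^2(\mc{O})$ and boundary data in $L^2(\partial\mc{O})$ (the latter supplied by the assumption $\uu_T \in \L^2_t(\partial\mc{O})$), and the sharp Lipschitz-boundary regularity for the Neumann--Laplace problem (Dahlberg, Jerison--Kenig) and its curl-curl analogue yields $\vp,\mathbf{A} \in H^{3/2}(\mc{O})$, hence $\nabla \vp,\,\rot\,\mathbf{A} \in (H^{1/2}(\mc{O}))^3$ with the required norm control; summing produces the first inequality.

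For the $L^2$ normal trace, I would fix a vector field $\Psi \in (C^\infty(\overline{\mc{O}}))^3$ with $\Psi\cdot\nn \geq c > 0$ on $\partial\mc{O}$ and expand
\[
\int_{\partial\mc{O}}|\uu|^2\,\Psi\cdot\nn\ds = \int_{\mc{O}}\div(|\uu|^2 \Psi)\dx.
\]
Using $\nabla|\uu|^2 = 2(\uu\cdot\nabla)\uu + 2\,\uu\times\rot\,\uu$ and then integrating by parts on the $(\uu\cdot\nabla)\uu\cdot\Psi$ term transfers all derivatives of $\uu$ onto $\Psi$, producing an additional boundary integral $\int_{\partial\mc{O}}(\uu\cdot\Psi)(\uu\cdot\nn)\ds$. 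Splitting the boundary trace as $\uu = \uu_T + (\uu\cdot\nn)\nn$, using $\Psi\cdot\nn\geq c$, and absorbing the cross term $\int_{\partial\mc{O}}(\Psi_T\cdot\uu_T)(\uu\cdot\nn)\ds$ by Cauchy--Schwarz isolates $\|\uu\cdot\nn\|^2_{L^2(\partial\mc{O})}$, controlled by $\|\uu\|_{\hc(\mc{O})}$, $\|\div\,\uu\|_{L^2(\mc{O})}$ and $\|\uu_T\|_{\L^2_t(\partial\mc{O})}$ (the formal calculation being justified by smooth approximation of $\uu$).

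The main obstacle is the sharp Lipschitz-boundary regularity invoked in the first step: on a non-smooth domain one cannot localise and reduce to a Fourier or reflection argument, and the $H^{3/2}$ estimate for the scalar and vector Neumann-type problems relies on the Lipschitz layer-potential machinery developed in the 1980s -- precisely the content of the cited reference.
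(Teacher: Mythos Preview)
The paper does not prove this theorem; it is quoted as a classical regularity result with a reference to Costabel's 1990 paper on Maxwell's equations in Lipschitz domains. So there is no in-paper proof to compare against, and your task was really to reconstruct Costabel's argument.

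Your outline contains a genuine circularity. In the Helmholtz step you set up the scalar Neumann problem $\Delta\vp=\div\,\uu$ with $\partial_\nn\vp=\uu\cdot\nn$, and then assert that ``both subproblems have \dots\ boundary data in $L^2(\partial\mc{O})$ (the latter supplied by the assumption $\uu_T\in\L^2_t(\partial\mc{O})$)''. But the Neumann datum is $\uu\cdot\nn$, not $\uu_T$, and a priori $\uu\cdot\nn$ lies only in $H^{-1/2}(\partial\mc{O})$; its membership in $L^2(\partial\mc{O})$ is precisely the content of the \emph{second} inequality you are trying to prove. With only $H^{-1/2}$ Neumann data the Jerison--Kenig machinery yields $\vp\in H^1(\mc{O})$, not $H^{3/2}$, so you get neither $\nabla\vp\in(H^{1/2}(\mc{O}))^3$ nor $(\nabla\vp)_T\in \L^2_t(\partial\mc{O})$, and the tangential datum for the curl--curl subproblem is then uncontrolled as well.

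The fix is to reverse the order: run the Rellich--Payne--Weinberger identity \emph{first}. Your computation there is correct and uses only $\uu\in\hc(\mc{O})$, $\div\,\uu\in L^2(\mc{O})$ and $\uu_T\in\L^2_t(\partial\mc{O})$, so it yields $\uu\cdot\nn\in L^2(\partial\mc{O})$ independently. With that in hand the Neumann problem has $L^2$ data, Jerison--Kenig gives $\vp\in H^{3/2}(\mc{O})$, the tangential trace $(\nabla\vp)_T$ is then in $L^2(\partial\mc{O})$, and the vector-potential step goes through. One further point you pass over too quickly: the phrase ``justified by smooth approximation of $\uu$'' hides a nontrivial density statement, namely that $(C^\infty(\overline{\mc{O}}))^3$ is dense in $\{\uu\in\hc(\mc{O}):\div\,\uu\in L^2,\ \uu_T\in\L^2_t(\partial\mc{O})\}$ for its graph norm on a Lipschitz domain. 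This is true but requires its own argument (and is in fact part of what Costabel establishes).
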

\begin{lemma}
\label{le:DivCurlCompact}
Let $\sigma$ be a $L^\infty(\Ga)$ function such that  $|\sigma(x)|>c>0$ for almost all $x \in \Ga$ and that is such  that its 
real and imaginary parts do not change 
sign. Then, the space $\H^1_{t,\sigma} (\Ga):= \{ \uu \in \Vg \,|\,\Div(\sigma \uu) \in L^2(\Ga)\}$ is compactly embedded 
into $\L_t^2(\Ga)$.
\end{lemma}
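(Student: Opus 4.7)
The plan is to combine the classical Helmholtz decomposition on the closed surface $\Ga$ with the isomorphism property of a $\sigma$-weighted surface Laplacian, in the spirit of Lemmas \ref{le:ASiso} and \ref{le:HelmSurf}. For $\uu \in \H^1_{t,\sigma}(\Ga)$ I decompose $\uu = \nabla_\Ga p + \Rotv q$ with $p, q \in \mathring H^1(\Ga)$; since $\Rot \uu = -\Delta_\Ga q \in H^1(\Ga)$, scalar elliptic regularity on $\Ga$ yields $q \in H^3(\Ga)$, and by Rellich, bounded sets of $\H^1_{t,\sigma}(\Ga)$ are mapped by $\uu \mapsto \Rotv q$ into precompact subsets of $\L^2_t(\Ga)$.

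The curl-free part $\nabla_\Ga p$ is controlled by exploiting $\Div(\sigma \uu) \in L^2(\Ga)$. I introduce the sesquilinear form
\[
a(p,\xi) := \int_\Ga \sigma \, \nabla_\Ga p \cdot \nabla_\Ga \overline{\xi} \ds, \qquad (p,\xi) \in \mathring H^1(\Ga) \times \mathring H^1(\Ga).
\]
Exactly as in the proof of Lemma \ref{le:ASiso}, inequality \eqref{eq:modulecomplexe} combined with the sign and lower-bound hypotheses on $\sigma$ gives $|a(p,p)| \geq (c/\sqrt{2})\,\|\nabla_\Ga p\|_{\L^2_t(\Ga)}^2$, and the Poincar\'e inequality on $\mathring H^1(\Ga)$ upgrades this to $|a(p,p)| \geq C \|p\|_{H^1(\Ga)}^2$. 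Lax--Milgram then ensures that the associated operator $A : \mathring H^1(\Ga) \to (\mathring H^1(\Ga))^*$ is an isomorphism. Substituting $\nabla_\Ga p = \uu - \Rotv q$ and integrating by parts on the closed surface $\Ga$ yields the key identity
\[
\langle Ap, \xi \rangle = -\int_\Ga \Div(\sigma \uu)\, \overline{\xi} \ds - \int_\Ga \sigma\, \Rotv q \cdot \nabla_\Ga \overline{\xi} \ds, \qquad \xi \in \mathring H^1(\Ga).
\]

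Compactness then follows from a sequential argument. Given a bounded sequence $(\uu_n)_n \subset \H^1_{t,\sigma}(\Ga)$ with Helmholtz components $(p_n, q_n)$, the sequence $(q_n)_n$ is bounded in $H^3(\Ga)$, so Rellich extracts a subsequence with $q_n \to q$ in $H^1(\Ga)$, hence $\Rotv q_n \to \Rotv q$ in $\L^2_t(\Ga)$. Similarly $(\Div(\sigma \uu_n))_n$ is bounded in $L^2(\Ga)$ and admits a subsequence converging in $H^{-1}(\Ga) \subset (\mathring H^1(\Ga))^*$. The identity above then shows that $(Ap_n)_n$ converges in $(\mathring H^1(\Ga))^*$, and applying $A^{-1}$ transfers this to convergence of $(p_n)_n$ in $\mathring H^1(\Ga)$, and hence of $(\nabla_\Ga p_n)_n$ in $\L^2_t(\Ga)$. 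Summing the two limits gives $\uu_n \to \uu$ in $\L^2_t(\Ga)$, proving compactness.

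The main obstacle is that $\sigma$ is only $L^\infty$, which precludes a direct product-rule transfer of regularity from $\Div(\sigma \uu)$ to $\Div \uu$. The strategy above sidesteps this by making the $\sigma$-weighted scalar operator $A$ the primary object: its isomorphism property relies only on the non-vanishing and non-changing-sign structure of $\sigma$ already encoded in the hypotheses, while the Helmholtz splitting reduces the compactness statement for $\uu$ to elliptic regularity for $q$ combined with this weighted isomorphism for $p$.
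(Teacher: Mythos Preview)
Your argument is correct, with one small over-claim: in this section $\Vg=\hcg^1(\Ga)$ is the surface $H(\mathrm{curl})$-space, so membership in $\H^1_{t,\sigma}(\Ga)$ only guarantees $\Rot\uu\in L^2(\Ga)$, not $H^1(\Ga)$ (this is exactly the bound the paper's own proof records). Elliptic regularity for $-\Delta_\Ga q=\Rot\uu$ on the closed $C^{1,1}$ surface then yields $q\in H^2(\Ga)$ rather than $H^3(\Ga)$, but this is harmless since $q\in H^2$ already gives $\Rotv q\in\H^1_t(\Ga)$ and the Rellich step goes through unchanged.

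The route, however, is genuinely different from the paper's. You take the \emph{standard} Hodge splitting $\uu=\Grad p+\Rotv q$, dispatch $q$ via scalar elliptic regularity for $\Delta_\Ga$, and then recover compactness of the gradient part by inverting the coercive $\sigma$-weighted form $A$ on data that converge thanks to the compact embedding $L^2(\Ga)\hookrightarrow H^{-1}(\Ga)$. The paper instead uses a \emph{$\sigma$-weighted} splitting $\uu=\Grad\vp+\sigma^{-1}\Rotv v$, obtained by first solving $\Div(\sigma\Grad\vp)=\Div(\sigma\uu)$; it then avoids any appeal to elliptic regularity by extracting $L^2(\Ga)$-convergent subsequences of the scalar potentials $\vp_n,v_n$ via $H^1\hookrightarrow L^2$ and upgrading to $H^1$-convergence through a direct Cauchy estimate on the weak forms. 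Your version is conceptually cleaner (one standard decomposition plus one isomorphism), at the cost of invoking $H^2$ regularity for $\Delta_\Ga$ on a $C^{1,1}$ surface; the paper's bootstrap is more self-contained and uses nothing beyond the coercivity of $p\mapsto\int_\Ga\sigma|\Grad p|^2\ds$, which is precisely what the sign hypotheses on $\sigma$ deliver.
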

\begin{proof}
Let $(\uu_n)_n$ be a bounded sequence in  $\H^1_{t,\sigma}(\Ga)$, then there exists $C>0$ such that for all $n\in \bN$ 
we have
\[
\| \uu_n\|_{\L_t^2(\Ga)}\leq C\,,\quad \|\Div(\sigma \uu_n)\|_{L^2(\Ga)} \leq C\quad \text{and} \quad \|\Rot( \uu_n)\|
_{L^2(\Ga)}\leq C.
\]
We define $\vp_n$ as being the unique function in  $ \mathring H^1(\Ga)$ that satisfies 
\begin{equation}
\label{eq:DivGa}
\Div(\sigma\nabla_{\Ga} \vp_n) =\Div (\sigma \uu_{n}) ,
\end{equation}
then $\sigma(\uu_n - \nabla_{\Ga} \vp_n)$ has a vanishing surface divergence and is in  $ \L^2_t(\Ga)$. As a 
consequence, it exists  $v_n \in  \mathring H^1(\Ga) $ 
such that $\Rotv v_n = \sigma(\uu_n - \nabla_{\Ga} \vp_n)$ and then $\uu_n = \nabla_\Ga \vp_n + \frac{1}{\sigma} \Rotv 
v_n$. We now prove that we can extract a  
subsequence from $(\Rotv v_n)_{n\in \bN}$ and from  $(\nabla_\Ga \vp_n)_{n\in \bN}$ that converge in $\L_t^2(\Ga)$. 

First of all, since  (\ref{eq:DivGa}) has a unique solution in  $ \mathring H^1(\Ga) $ that depends continuously on the 
right-hand side, it exists $C>0$ such that $\|
\vp_n\|_{H^1(\Ga)} \leq C \|\Div (\sigma \uu_{n})\|_{L^2(\Ga)}$. The sequence $(\vp_n)_{n\in \bN}$ is in particular 
bounded in  $H^1(\Ga)$ therefore we can extract 
from it a subsequence still denoted by $(\vp_n)_{n\in \bN}$ that converges in $L^2(\Ga)$. We prove next that it is of 
Cauchy type in $H^1(\Ga)$. Let us define $
\vp_{nm} := \vp_n - \vp_{m}$ and  $f_{nm} := \Div (\sigma \uu_{n}) - \Div (\sigma \uu_{m})$, then there exists $C>0$ such that
\[
 \|\nabla_\Ga \vp_{nm}\|^2_{\L_t^2(\Ga)} \leq C \left|\int_{\Ga} \sigma \nabla_\Ga \vp_{nm}\cdot\nabla_\Ga 
\overline{\vp_{nm}}\ds \right| =C\left|\int_{\Ga} f_{nm} 
\overline{\vp_{nm}} \ds \right|.
\]
Since $f_{nm}$ is bounded  in $L^2(\Ga)$ and $(\vp_{n})_{n\in \bN}$ is a Cauchy sequence in $L^2(\Ga)$, we obtain that 
$(\nabla_\Ga \vp_{n})_{n\in \bN}$ is a 
Cauchy sequence in $L^2(\Ga)$ whence $(\vp_n)_{n\in \bN}$ converges in $H^1(\Ga)$. 

Concerning $(v_n)_{n\in \bN}$ we proceed in a similar way. First of all, it exists  $C>0$ such that $\|\Rotv v_n\|
_{\L^2_t(\Ga)} = \|\nabla_\Ga v_n\|_{\L^2_t(\Ga)} \geq 
C \|v_n\|_{H^1(\Ga)}$ since $\Ga$ is $C^{1,1}$ (this is still true for a Lipschitz boundary). But, we recall that $\Rotv v_n =
\sigma(\uu_n - \nabla_{\Ga} \vp_n) $,
therefore it is a bounded sequence in $\L^2_t(\Ga)$. From the compact embedding of  $H^1(\Ga)$ in $L^2(\Ga)$,  we 
deduce that we can extract from $(v_n)_{n\in 
\bN}$ a subsequence still denoted  $(v_n)_{n\in \bN}$ that converges in $L^2(\Ga)$. As previously, we conclude by 
proving that $(\Rotv v_n)_{n\in \bN}$ is a 
Cauchy sequence in $\L^2_t(\Ga)$. Let us denote  $f_{nm} := \Rot (\uu_n)-\Rot(\uu_m)$, there exists $C>0$ such that
\[
 \|\Rotv v_{nm}\|^2_{(\L_t^2(\Ga))^3} \leq C \left|\int_{\Ga} \sigma^{-1} \Rotv v_{nm}\cdot\Rotv \overline{v_{nm}}  \ds \right| 
= C\left|\int_{\Ga} f_{nm} \overline{v_{nm}} ,
\ds \right|
\]
whence $\Rotv v_{n}$ is a Cauchy sequence in  $\L^2_t(\Ga)$ and it converges in $\L^2_t(\Ga)$. 
This concludes the proof since we have proven that one can extract a sequence of $(\uu_n)_{n\in \bN}$ that converges in 
$\L^2_t(\Ga)$. 
\end{proof}
The following lemma definitely justifies the use of the Helmholtz' decomposition introduced in Lemma \ref{le:HelmSurf}.
\begin{lemma}
\label{le:CompactSurf}
If $\ld\in L^\infty(\Ga)$ satisfies Hypothesis \ref{hyp:Maxwelldirect} then the embedding of $X$  into $\L^2_t(\Ga)$ is 
compact.
\end{lemma}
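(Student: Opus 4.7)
The plan is to show that $X$ embeds continuously into the space $\H^1_{t,\ld}(\Ga)$ of Lemma \ref{le:DivCurlCompact} and then invoke that compactness result. This reduces the problem to verifying that $\Div(\ld\vv) \in L^2(\Ga)$ for every $\vv \in X$, with a bound controlled by $\|\vv\|_{\Vg}$.

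First, I would reinterpret the defining condition of $X$. Integrating by parts the identity
\[
\int_{\Ga}\ld \vv\cdot\nabla_\Ga\overline{\xi}\ds + \<\MtE_\Ga(\vv),\nabla_\Ga \xi\>_{\hdg^{-1/2}(\Ga),\hcg^{-1/2}(\Ga)} = 0,\quad \forall\,\xi\in \mathring H^1(\Ga),
\]
yields, in the distributional sense on $\Ga$, the identity $\Div(\ld\vv) = -\Div(\MtE_\Ga(\vv))$. Using $\MtE_\Ga(\vv) = \nn\times\ee$ where $(\ee,\hh)$ solves \eqref{pb:MaxwellDir} with $\hh_T = \vv$, together with the surface identities \eqref{eq:rotequality} and $\nn\times(\nn\times\ee) = -\ee_T$, one finds
\[
\Div(\MtE_\Ga(\vv)) = \Div(\nn\times\ee) = -\Rot(\ee_T) = -\nn\cdot \rot\ee = -i\om\,\nn\cdot \hh|_\Ga,
\]
so that $\Div(\ld\vv) = i\om\,\nn\cdot \hh|_\Ga$ as distributions on $\Ga$.

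Next, I would show that $\nn\cdot\hh|_\Ga$ lies in $L^2(\Ga)$ with the right continuity. Fix a ball $B_R$ with $\b\Om\subset B_R$ and work on the bounded Lipschitz domain $\Om_R = B_R\setminus\b\Om$. Then $\hh\in \hc(\Om_R)$; since $(\ee,\hh)$ solves Maxwell's equations, $\div\hh = 0 \in L^2(\Om_R)$; and $\hh_T = \vv \in \L^2_t(\Ga)$ because $\vv\in \hcg^1(\Ga)\subset \L^2_t(\Ga)$. By standard elliptic estimates for the Maxwell transmission problem, combined with the continuity of the Calder\'on operator, $\hh_T$ on $\partial B_R$ is also controlled in $\L^2_t(\partial B_R)$ by $\|\vv\|_{\Vg}$. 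Theorem \ref{th:RegTr} applied to $\hh$ in $\Om_R$ then gives $\nn\cdot\hh|_\Ga \in L^2(\Ga)$ together with
\[
\|\nn\cdot\hh\|_{L^2(\Ga)} \leq C\bigl(\|\hh\|_{\hc(\Om_R)} + \|\hh_T\|_{\L^2_t(\partial \Om_R)}\bigr) \leq C'\|\vv\|_{\Vg}.
\]

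Combining the two steps shows $\Div(\ld\vv) \in L^2(\Ga)$ with $\|\Div(\ld\vv)\|_{L^2(\Ga)} \leq C\|\vv\|_{\Vg}$, so $X\hookrightarrow \H^1_{t,\ld}(\Ga)$ continuously. Since $\ld$ satisfies the modulus and sign hypotheses of Lemma \ref{le:DivCurlCompact} thanks to Hypothesis \ref{hyp:Maxwelldirect}, the embedding $\H^1_{t,\ld}(\Ga)\hookrightarrow \L^2_t(\Ga)$ is compact, and hence so is $X\hookrightarrow \L^2_t(\Ga)$. I expect the main technical point to be Step 2: producing the $L^2$ estimate for $\nn\cdot\hh|_\Ga$, which requires carefully combining interior continuous dependence for the exterior Maxwell problem with the boundary regularity theorem to control the trace on both $\Ga$ and the artificial boundary $\partial B_R$ by $\|\vv\|_{\Vg}$.
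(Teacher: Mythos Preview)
Your proposal is correct and follows essentially the same route as the paper: both arguments use the defining constraint of $X$ to obtain $\Div(\ld\vv)=-\Div(\MtE_\Ga\vv)=i\om\,\nn\cdot\hh$, then appeal to Theorem~\ref{th:RegTr} to bound $\nn\cdot\hh$ in $L^2(\Ga)$ and conclude via Lemma~\ref{le:DivCurlCompact}. Your version is in fact slightly more careful than the paper in that you work explicitly on the bounded domain $\Om_R$ and note the need to control $\hh_T$ on the artificial boundary $\partial B_R$; otherwise the two proofs coincide.
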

\begin{proof}
Let $(\uu_n)_n$ be a bounded sequence of  $X$, then it exists $C>0$ such that for all  $n \in \bN$
 \[
  \|\uu_n\|_{\Vg} \leq C
 \]
and since $\uu_n \in X$, we also have
 \[
  \Div(\ld \uu_n) =- \Div(\MtE_\Ga (\uu_n))
 \]
 in the sense of distributions. 
 We define $(\ee_n,\hh_n)$ as being the unique solution to \eqref{pb:MaxwellDir} with $\vv=\uu_n$ on  $\Ga$.  By using 
\eqref{eq:rotequality} we have that
\[
 \Div(\ld \uu_n) = - \Div(\nn\times\ee_n)= \nn\cdot\rot\ee_n =  i\om \nn\cdot\hh_n.
\]
Whence, since $\hh_{n,T} = \uu_n$, it exists $C>0$ such that
\[
\|\hh_n\|_{\hc(\Om)}\leq C \quad \text{and} \quad \|\hh_{n,T}\|_{\L^2_t(\Ga)}\leq C.
\] 
From Theorem  \ref{th:RegTr} we deduce that it exists $C>0$ such that for all $n \in \bN$
\[
\|\Div(\ld \uu_n)\|_{ L^2(\Ga)}=k \|\nn\cdot\hh_n\|_{ L^2(\Ga)} \leq C.
\]
Lemma \ref{le:DivCurlCompact} proves then that we can extract a sequence of  $(\uu_n)_n$ that converges in $
\L^2_t(\Ga)$ which finishes the proof.
\end{proof}

We now conclude the study of well-posedness of problem \eqref{pb:GIBCrotVol} for $\Z = \Rotv\eta\Rot + \ld$.
\begin{theorem}
\label{th:existrot}
Let $(\ld,\eta) \in (L^\infty(\Ga))^2$ be such that Hypothesis \ref{hyp:Maxwelldirect} is satisfied. 
Then for all $\ff \in \Vg^*$ problem \eqref{pb:GIBCrotVol} with $\Z = \Rotv\eta\Rot + \ld$ has a unique solution $(\ee,\hh) $ 
and for all ball $B_R$ that contains $\b 
\Om$ it exists $C_R>0$ such that
 \[
 \|\ee\|_{\hc(\Om_R)} + \|\hh\|_{V_{\hh,R}}  \leq C_R \|\ff\|_{\Vg^*}.
 \]
 \end{theorem}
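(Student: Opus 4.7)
The plan is to work with the equivalent surface formulation \eqref{pb:Surface} and exploit the Helmholtz' decomposition $\Vg = \nabla_\Ga\mathring H^1(\Ga) \oplus X$ from Lemma \ref{le:HelmSurf}. Note that Theorem \ref{th:existgen} cannot be applied directly because $\Vg = \hcg^1(\Ga)$ fails to be compactly embedded in $\hcg^{-1/2}(\Ga)$, which is precisely why the introduction of $X$ and the auxiliary operator $A_S$ is needed. Hypothesis \ref{hyp:Maxwelldirect} delivers $\Re\<\Z\vv,\vv\>_{\Vg^*,\Vg} \geq 0$, so uniqueness follows immediately from Theorem \ref{th:MaxUnicite} and only existence remains to be shown.

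First I would write $\uu = \nabla_\Ga p + \ww$ with $p\in\mathring H^1(\Ga)$, $\ww\in X$, and decouple the variational equation $a_\Ga(\uu,\vv) = l_\Ga(\vv)$ along this decomposition. Since $\Rot\nabla_\Ga\xi = 0$ and $\ww \in X$, the cross term $a_\Ga(\ww,\nabla_\Ga\xi)$ vanishes identically, so testing with $\nabla_\Ga\xi$ for $\xi \in \mathring H^1(\Ga)$ reduces the equation to $(A_S p,\xi)_{H^1(\Ga)} = l_\Ga(\nabla_\Ga\xi)$, which Lemma \ref{le:ASiso} solves uniquely and continuously for $p$. Substituting this $p$ leaves the reduced equation $a_\Ga(\ww,\ww') = l_\Ga(\ww') - a_\Ga(\nabla_\Ga p,\ww')$ to be solved on $X$.

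I would then split $a_\Ga|_{X\times X}$ into the coercive part $t(\ww,\ww') := \int_\Ga(\eta\,\Rot\ww\,\overline{\Rot\ww'} + \ld\,\ww\cdot\b{\ww'})\ds$, whose coercivity on $\hcg^1(\Ga)$ (and hence on $X$) follows from inequality \eqref{eq:modulecomplexe} combined with the sign and lower-bound conditions on $\ld$ and $\eta$ of Hypothesis \ref{hyp:Maxwelldirect}, and the remainder $(\ww,\ww') \mapsto \<\MtE_\Ga\ww,\ww'\>$. The key input here is Lemma \ref{le:CompactSurf}, which provides a compact embedding $X \hookrightarrow \L^2_t(\Ga)$, hence $X \hookrightarrow \hcg^{-1/2}(\Ga)$; combined with the continuity of $\MtE_\Ga : \hcg^{-1/2}(\Ga) \to \hdg^{-1/2}(\Ga) \hookrightarrow X^*$ this turns the remainder into a compact operator from $X$ into $X^*$. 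Hence $a_\Ga|_{X\times X}$ is Fredholm of index zero.

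The main (and only non-routine) step is to close the Fredholm alternative by proving injectivity on $X$. If $\ww\in X$ satisfies $a_\Ga(\ww,\ww') = 0$ for all $\ww'\in X$, the membership $\ww\in X$ and the identity $\Rot\nabla_\Ga\xi = 0$ also give $a_\Ga(\ww,\nabla_\Ga\xi) = 0$ for every $\xi \in \mathring H^1(\Ga)$, so by the direct sum property $a_\Ga(\ww,\vv)=0$ for all $\vv\in\Vg$; equivalently, $\ww$ solves \eqref{pb:Surface} with $\ff=0$. Via Lemma \ref{le:EquiVolSurf} and Theorem \ref{th:MaxUnicite} this forces $\ww=0$, which produces the isomorphism $\MtE_\Ga+\Z:\Vg\to\Vg^*$; the claimed norm estimate then follows from the open mapping theorem together with the continuous dependence of the solution of \eqref{pb:MaxwellDir} on its boundary datum.
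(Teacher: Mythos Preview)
Your overall strategy---reduce to the surface formulation, use the Helmholtz decomposition of Lemma~\ref{le:HelmSurf}, solve for $p$ via $A_S$, then set up a Fredholm problem on $X$ and close it by the uniqueness Theorem~\ref{th:MaxUnicite}---is exactly the paper's. The injectivity step (extending $a_\Ga(\ww,\cdot)=0$ from $X$ to all of $\Vg$ using $\ww\in X$ and $\Rot\nabla_\Ga\xi=0$) is correct and matches the paper.

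There is, however, a genuine gap in your choice of coercive part. Your form
\[
t(\ww,\ww')=\int_\Ga\bigl(\eta\,\Rot\ww\,\overline{\Rot\ww'}+\ld\,\ww\cdot\b{\ww'}\bigr)\ds
\]
is \emph{not} coercive on $\hcg^1(\Ga)$ in the case that the whole section is built for, namely when $\Im(\eta)$ and $\Im(\ld)$ have opposite (constant) signs. With, say, $\eta=i$ and $\ld=-i$ one gets $t(\ww,\ww)=i\bigl(\|\Rot\ww\|_{L^2}^2-\|\ww\|_{L^2}^2\bigr)$, which vanishes on a large set; inequality~\eqref{eq:modulecomplexe} only yields $|t(\ww,\ww)|\geq \tfrac{1}{\sqrt2}\bigl(|\Re t|+|\Im t|\bigr)$, and neither piece controls $\|\ww\|_{\hcg^1}^2$ here. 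When the imaginary parts have the same sign your $t$ is indeed coercive, but that is precisely the easy case in which, as the paper notes at the start of Section~\ref{SecCurl}, $\Z$ itself is coercive and no Helmholtz splitting is needed.

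The fix is the one the paper uses: take the coercive part with $\eta$ in \emph{both} slots,
\[
(C_X\ww,\ww')_{\Vg}:=\int_\Ga \eta\bigl(\Rot\ww\,\overline{\Rot\ww'}+\ww\cdot\b{\ww'}\bigr)\ds,
\]
which is coercive on $\hcg^1(\Ga)$ by~\eqref{eq:modulecomplexe} because only the single coefficient $\eta$ (with $|\eta|\ge c$ and $\Im\eta$ of fixed sign) appears. The leftover zero-order piece $\int_\Ga(\ld-\eta)\,\ww\cdot\b{\ww'}\ds$ then joins $\<\MtE_\Ga\ww,\ww'\>$ in the compact remainder, using the compact embedding $X\hookrightarrow\L^2_t(\Ga)$ from Lemma~\ref{le:CompactSurf} (which you already invoke). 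With this correction your argument goes through and coincides with the paper's proof.
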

\begin{proof}
 We take $\ff\in \Vg^*$ and $(\ld,\eta) \in (L^\infty(\Ga))^2$ such that Hypothesis \ref{hyp:Maxwelldirect} is satisfied.
 Since $\Vg  = \hcg^1(\Ga) \subset \hcg^{-1/2}(\Ga)$, we know from Lemma \ref{le:EquiVolSurf} that problem 
\eqref{pb:GIBCrotVol} is equivalent to 
problem \eqref{pb:Surface}. As a consequence, it is sufficient to prove that \eqref{pb:Surface} is well-posed.
Theorem \ref{th:MaxUnicite}, gives uniqueness, we only have to prove existence.
We look for a solution $\uu$ that writes  $\uu = \uu_0+\Grad p$ with $\uu_0 \in X$ and  $p\in  \mathring H^1(\Ga) $. The 
function $\uu$ has to satisfiy
 \[
  a_\Ga(\uu,\vv)= l_\Ga(\vv) \quad \text{for all } \vv\in \Vg
 \]
which if we use test functions being gradients of functions of $ \mathring H^1(\Ga) $ implies that  $p$ has to satisfy
\begin{equation}
\label{eq:definitionp}
 (A_S p,\xi) = l_\Ga(\Grad \xi)  \quad \text{for all } \xi \in  \mathring H^1(\Ga).
\end{equation}
Let us recall that $A_S$ is an isomorphism of $ \mathring H^1(\Ga)$, therefore \eqref{eq:definitionp} has a unique solution  $p \in  \mathring 
H^1(\Ga) $. If now we use test functions in  $X$, 
we obtain that $\uu_0$ has to satisfy
\begin{equation}
\label{eq:definitionu0}
 a_\Ga(\uu_0,\vv_0)= l_\Ga(\vv_0) -a_\Ga(\Grad p,\vv_0) \quad \text{for all } \vv_0\in X.
\end{equation}
Let us prove that \eqref{eq:definitionu0} has a unique solution in the Hilbert space $X$. We define $C_X : X \rightarrow X
$ and $K_X :X \rightarrow X$ the bounded 
and linear operators that satisfy
\[
 (C_X \vv,\ww)_{\Vg} = \int_\Ga \eta\ (\Rot \vv\ \Rot \b{\ww} +  \vv \cdot \b{\ww}) \ds
\]
\[
 (K_X \vv,\ww)_{\Vg} = \int_\Ga (-\eta +\ld) \vv\ \overline{\ww} \ds +\<\MtE_\Ga(\vv),\ww\>_{\hdg^{-1/2}(\Ga),\hcg^{-1/2}
(\Ga)} 
\]
for all $\vv,\ww \in X$.
With these definitions $((C_X + K_X)\vv,\ww)_{\Vg}=a_\Ga(\vv,\ww)$  for all $\vv,\ww \in X$. 
From Theorem \ref{th:MaxUnicite} we know that  $C_X+K_X$ is injective, let us prove that 
$C_X + K_X$ is a Fredholm type 
operator of index $0$. First of all, $C_X$ is coercive since $\eta$ satisfies Hypothesis \ref{hyp:Maxwelldirect}. Moreover, 
$\MtE_\Ga : \Vg \rightarrow \Vg^*$ is 
compact  and since the injection of $X$ in $\L^2_t(\Ga)$ is compact  (Lemma \ref{le:CompactSurf}) we deduce 
that  $K_X$ is a compact operator. This 
guaranties well-posedness of 
 \eqref{eq:definitionu0} which has a unique solution $\uu_0\in X$ that depends continuously on $p$ and $\ff$. 
 To conclude, we  built a function  $\uu = \uu_0 + \Grad p$ that solves \eqref{pb:Surface}. We obtain the continuous 
dependence of $\uu$ with respect to $\ff$ by using 
Lemma \ref{le:HelmSurf} together with the fact that $A_S : \mathring H^1(\Om) \rightarrow  \mathring H^1(\Om)$ and 
$C_X+K_X  : X \rightarrow X$ are 
isomorphisms.
\end{proof}

 \subsection{The case of  $\Z= \Grad\ga\Div   + \ld$}
 \label{sec:div}
 We conclude this serie of examples with a third one for which we cannot use the surface formulation \eqref{pb:Surface}.
Let us consider
 \[
 \Z = \Grad\ga\Div   + \ld
 \]
for $(\ld,\ga)$ two functions of  $L^\infty(\Ga)$. This operator is linear and continuous from $\Vg:=\hdg^1(\Ga) $ into its 
dual and we cannot use the formulation 
\eqref{pb:Surface} since $\Vg$ is not included into $\hcg^{-1/2}(\Ga)$ in this case. Nevertheless, we show that under 
appropriate sign assumptions on $\ld$ and $
\ga$ we can apply Theorem \ref{th:div}.
\begin{hypothese}
\label{hyp:Maxwelldirectdivgrad}
The functions $(\ld,\ga) \in (L^\infty(\Ga))^2$ are such that
\[
 \Re(\ld) \geq 0 \ , \quad \Re(\ga) \leq 0 \quad \text{a.e. on } \Ga,
\]
and it exists $c>0$ such that
\[
 \Im( \ld ) \geq c \ , \quad \Im(\ga) \leq -c \quad \text{a.e. on } \Ga.
\]
\end{hypothese}
Under this restrictive sign assumptions (compare to the two previous examples) $\Z$ satisfies assumptions of Theorem \ref{th:div} and we have the following result. 
\begin{theorem}
Let $(\ld,\ga) \in (L^\infty(\Ga))^2$ be such that Hypothesis  \ref{hyp:Maxwelldirectdivgrad} is satisfied. Then for all $\ff \in 
\Vg^*$ problem  \eqref{pb:GIBCrotVol}  
has a unique solution $(\ee,\hh)$ and for all ball $B_R$ that contains $\b \Om$ it exists $C_R>0$ such that
 \[
  \|\ee^s\|_{\hc(\Om_R)}+ \|\hh^s\|_{V_{\hh,R}} \leq C \|\ff\|_{\Vg^*}.
 \]
\end{theorem}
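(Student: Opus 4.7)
The plan is to verify that $\Z = \Grad \ga \Div + \ld$ fits the assumptions of Theorem \ref{th:div}, so the conclusion will follow immediately from that theorem; no Helmholtz decomposition or Fredholm machinery is needed here, since the stronger imaginary-part coercivity will provide directly a sign-definite bilinear form.

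The first step is to derive an explicit formula for the quadratic form $\langle \Z \uu, \uu\rangle_{\Vg^*, \Vg}$ for $\uu \in \Vg = \hdg^1(\Ga)$. Since $\Div \uu \in L^2(\Ga)$ and $\ga \in L^\infty(\Ga)$, the product $\ga \Div \uu$ lies in $L^2(\Ga)$, and the adjointness between $\Grad$ and $-\Div$ recalled in Section \ref{sec:GIBCMaxwell} (extended by density from smooth tangent fields to all of $\Vg$) yields
\[
\langle \Z \uu, \uu\rangle_{\Vg^*, \Vg} = -\int_\Ga \ga\, |\Div \uu|^2 \ds + \int_\Ga \ld\, |\uu|^2 \ds.
\]

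Second, I would check the two sign conditions on this pairing. Taking real parts and using $\Re(\ga)\leq 0$ and $\Re(\ld)\geq 0$ a.e. on $\Ga$, I get
\[
\Re\langle \Z \uu, \uu\rangle_{\Vg^*, \Vg} = \int_\Ga |\Re(\ga)|\, |\Div \uu|^2 \ds + \int_\Ga \Re(\ld)\, |\uu|^2 \ds \geq 0,
\]
which is exactly Hypothesis \ref{hyp:uniciteMaxwell}. Taking imaginary parts and using the quantitative sign assumptions $\Im(\ga)\leq -c$ and $\Im(\ld)\geq c$ a.e. on $\Ga$, I obtain
\[
\Im\langle \Z \uu, \uu\rangle_{\Vg^*, \Vg} = -\int_\Ga \Im(\ga)\, |\Div \uu|^2 \ds + \int_\Ga \Im(\ld)\, |\uu|^2 \ds \geq c\left(\|\Div \uu\|^2_{L^2(\Ga)} + \|\uu\|^2_{\L^2_t(\Ga)}\right) = c\, \|\uu\|^2_{\Vg},
\]
which is precisely the coercivity condition required in Theorem \ref{th:div}. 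Both hypotheses being verified, that theorem gives existence, uniqueness and the continuity estimate in a single step.

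The only subtlety is the rigorous justification of the quadratic form identity, i.e.~interpreting $\Grad(\ga \Div \uu) \in \Vg^*$ and pairing it with $\uu \in \Vg$ through the surface integration-by-parts formula; this is a routine density argument starting from smooth tangent fields, for which the adjoint relations of Section \ref{sec:GIBCMaxwell} apply pointwise. Once this identity is in place, the whole argument reduces to taking real and imaginary parts and invoking Theorem \ref{th:div}, so I do not expect any substantial obstacle.
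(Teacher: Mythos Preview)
Your proposal is correct and matches the paper's approach exactly: the paper does not give a separate proof but simply notes that under Hypothesis~\ref{hyp:Maxwelldirectdivgrad} the operator $\Z$ satisfies the assumptions of Theorem~\ref{th:div}, and your argument is precisely the verification of those assumptions via the explicit quadratic form $\langle \Z\uu,\uu\rangle = -\int_\Ga \ga|\Div\uu|^2\ds + \int_\Ga \ld|\uu|^2\ds$.
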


\begin{remark}
We can generalise further the results of this section to the case of a vanishing functions $\ld$ and $\ga$ on $\Ga$. In this case we 
use 
\[
\Vg := \left \{\vv\in \hcg^{-1/2}(\Ga) \phantom{\int_\Ga}\right. \left| \ \int_\Ga |\ld| |\vv|^2 + |\ga| |\Div \vv|^2 \ ds <+\infty \right\}
\]
endowed with the norm
\[
\| \vv\|^2_{\Vg} := \|\vv\|^2_{\hcg^{-1/2}(\Ga)} + \int_\Ga |\ld| |\vv|^2 + |\ga| |\Div \vv|^2 \ ds . 
\]
Existence and uniqueness is then ensured as soon as 
\[
 \Re(\ld) \geq 0 \ , \quad \Re(\ga) \leq 0 \quad \text{a.e. on } \Ga,
\]
and 
\[
 \Im( \ld ) \geq 0 \ , \quad \Im(\ga) \leq 0 \quad \text{a.e. on } \Ga.
\]
The question of existence of a solution when $\ld$ or $\ga$ have a negative 
imaginary part cannot be treated in this way 
and to the knowledge of the author is still open. 
\end{remark}

\section*{Appendix - proof of Theorem \ref{th:div}}
First of all, uniqueness holds from Theorem \ref{th:MaxUnicite}. To prove existence we adapt the procedure presented in 
\cite[chapter 10]{Mon03} in the case of a Dirichlet type boundary condition to the volume 
formulation \eqref{pb:GIBCBR}. We do not give a precise proof but we only highlight the main steps since it is rather 
classical.
As stated in section \ref{sec:volbouded}, the electromagnetic field $(\ee,\hh)$ solves \eqref{pb:GIBCBR} if and only if $\hh$ solves the 
variational formulation \eqref{eq:varformglo}. 
To study it we introduce a Helmholtz' decomposition for $\hc(\Om_R)$ in order to handle the $L^2(\Om_R)$ contribution 
which is not a compact perturbation of the principal part. Let us introduce the following Hilbert spaces
\[
 H_0^1(\Om_R) := \left\{p \in H^1(\Om_R)\ |   \   p=0 \text{ on } \Gamma\right\}
\]
and
\begin{align*}
 X_R &:=  \left\{ \uu \in V_{\hh,R} \  \phantom{\int_p}  \right | \\ 
 & \left. \int_{\Om_R} \om^2 \uu \cdot \nabla \overline{\xi} \dx  + i\om \< \MtE_R(\hat{x}\times\uu),\nabla_{\partial B_R} \xi 
\>_{\H^{-1/2}_{\div_{\partial B_R}}(\partial B_R),\H^{-1/2}_{\text{curl}_{\partial B_R}}(\partial B_R)}
=0 \ \forall \xi \in  H_0^1(\Om_R) \right\}.
\end{align*}
Let us define the operator $A_R :  H_0^1(\Om_R) \rightarrow  H_0^1(\Om_R)$ characterised by  
\[
(A_R p, \xi)_{H^1(\Om_R)} :=  \int_{\Om_R}  \om^2 \nabla p \cdot \nabla \overline{\xi} \dx  + i\om \< \MtE_R(\hat{x}\times
\nabla_{\partial B_R} p),\nabla_{\partial B_R} \xi \>_{\H^{-1/2}_{\div_{\partial B_R}}(\partial B_R),\H^{-1/2}_{\text{curl}
_{\partial B_R}}(\partial B_R)}
\]
for all $p, \xi$ in $H^1_0(\Om_R)$. Due to the symmetry of Maxwell's equations, the Magnetic-to-Electric map $\MtE_R$ 
is equal to $-G_R$ where $G_R$ is the Electric-to-Magnetic map which maps $\ee_T$ to $\hat x\times  \hh$ where $(\ee,\hh)$ 
solves Maxwell's equations outside $B_R$ together with the Silver-Mueller radiation condition. As a consequence, we 
can use the results of \cite{Mon03} Lemma 9.23 and 9.24 that state that it exists $\wt{\MtE}_R : \H^{-1/2}_{\div_{\partial 
B_R}}(\partial B_R) \rightarrow \H^{-1/2}_{\div_{\partial B_R}}(\partial B_R)$ such that for all $\uu \in  \H^{-1/2}
_{\div_{\partial B_R}}(\partial B_R)$ we have
\[
\<\wt{\MtE}_R \uu,\uu\times\hat x\>_{\H^{-1/2}_{\div_{\partial B_R}}(\partial B_R),\H^{-1/2}_{\text{curl}_{\partial B_R}}
(\partial B_R)} \geq c \|\uu\|^2_{\H^{-1/2}_{\div_{\partial B_R}}(\partial B_R)}
\]
for $c>0$ and  $\MtE_R + i\om \wt{\MtE}_R : \H^{-1/2}_{\div_{\partial B_R}}(\partial B_R) \rightarrow \H^{-1/2}
_{\div_{\partial B_R}}(\partial B_R)$ is a compact operator. We deduce that $A_R$ is an isomorphism and similarly to  the 
proof of Lemma \ref{le:HelmSurf} we obtain the following Helmholtz' decomposition
\[
V_{\hh,R} = X_R \oplus \nabla  H^1_0(\Om_R).
\]
Moreover, $X_R$ is compactly embedded into  $(L^2(\Om_R))^3$ (see the proof of \cite[Lemma 10.4]{Mon03}).
We also remark that from the sign assumption on the imaginary part of $\Z$,  it exists $C>0$ such that
\begin{equation}
\label{eq:CoercZweak}
\left|\int_{\Om_R} |\rot\ \uu|^2 + |\uu|^2\ dx - i\om \< \Z \uu_T,\uu_T \>_{\Vg^*,\Vg}\right| \geq C \|\uu\|^2_{V_{\hh,R}}
\end{equation}
for all $\uu \in V_{\hh,R}$. Finally, let us recall the result of Lemma 10.5 in \cite{Mon03} that states that $S_R$ can be 
decomposed as $S_R = S_1 +S_2$ where $S_1 : \H^{-1/2}_{\div_{\partial B_R}}(\partial B_R) \rightarrow \H^{-1/2}
_{\div_{\partial B_R}}(\partial B_R)$ has a positive imaginary part and  $S_2 \circ \gamma_{t,R} :X_R \rightarrow \H^{-1/2}
_{\div_{\partial B_R}}(\partial B_R)$ is compact where $\gamma_{t,R} \uu = \hat x \times\uu|_{\partial B_R}$ for all $\uu 
\in V_{\hh,R}$.

We now have all the tools we need to conclude the proof. Let us build a solution that decomposes as  $\hh = \hh_0 + 
\nabla p$ where $p \in  H_0^1(\Om_R)$. If $\hh$ solves \eqref{eq:varformglo} then  $p$ has to solve $A_R p=0$ (there is 
no source term in $\Omega$) and therefore, $p=0$. As a consequence, $\hh_0$ has to solve
\begin{equation}
\label{eq:operateurequationh0}
(C_R + K_R) \hh_0 = F
\end{equation}
where  the operators $C_R : X_R \rightarrow X_R$ and $K_R:  X_R \rightarrow X_R$  are defined by
\begin{align*}
(C_R\vv,\ww)_{V_{\H,R}} := \int_{\Om_R} \rot\ \vv \cdot \rot\ \b\ww& + \vv\cdot \b\ww\dx  -i\om\<\Z \vv, \ww\>_{\Vg^*,\Vg}\\
&-i \om \<S_1(\hat{x}\times\vv), \ww\>_{\H^{-1/2}_{\div_{\partial B_R}}(\partial B_R),\H^{-1/2}_{\text{curl}_{\partial B_R}}
(\partial B_R)}  \\
(K_R\vv,\ww)_{V_{\H,R}} :=  -(\om^2+1)  \int_{\Om_R}\vv\cdot \b\ww\dx & - i\om\<S_2(\hat{x}\times\vv),\ww\>_{\H^{-1/2}
_{\div_{\partial B_R}}(\partial B_R),\H^{-1/2}_{\text{curl}_{\partial B_R}}(\partial B_R)} 
\end{align*}
for all $\vv,\ww  \in  X_R$ and $F$ is such that $(F,\ww)_{V_{\hh,R}} := i\om \< \ff,\ww\>_{\Vg^*,\Vg} $ for all  $\ww \in X_R
$. From \eqref{eq:CoercZweak} and the properties of $S_1$ and $S_2$ we deduce that $C_R$ is coercive on $X_R$ 
and $K_R$ is compact. The general uniqueness result Theorem \ref{th:MaxUnicite} ensures that it exists $\hh_0$ that 
solves \eqref{eq:operateurequationh0} and that depends continuously on $F$. Therefore, it exists a unique $\hh = \hh_0$ 
that solves the variational formulation \eqref{eq:varformglo} and we obtain the desired result.

\bibliographystyle{plain}
\bibliography{GIBCDirect}{}

\end{document}